
\NeedsTeXFormat{LaTeX2e}[1995/12/01]
\documentclass[10pt]{bmc_article}

\usepackage{cite} 
\usepackage{url}  
\usepackage{ifthen}  
\usepackage{multicol}   
\usepackage[utf8]{inputenc} 
\urlstyle{rm}

\usepackage{amsmath}
\usepackage{amssymb}
\usepackage{amsmath,amssymb,amsfonts,amsthm,graphics,
latexsym, amscd, amsfonts, epsfig, eepic,epic,psfrag,setspace,eufrak}
\usepackage{mathrsfs}
\usepackage{color}
\usepackage{eucal}
\usepackage{eufrak}
\usepackage[all]{xypic}
\usepackage{xspace}
\usepackage{algorithm}
\usepackage{algorithmic}



\setlength{\topmargin}{0.0cm}
\setlength{\textheight}{21.5cm}
\setlength{\oddsidemargin}{0cm}
\setlength{\textwidth}{16.5cm}
\setlength{\columnsep}{0.6cm}

\theoremstyle{plain}
\newtheorem{theorem}{Theorem}

\newtheorem{lemma}{Lemma}

\theoremstyle{definition}
\newtheorem{definition}{Definition}

\theoremstyle{remark}

\newboolean{publ}




\newenvironment{bmcformat}{\baselineskip20pt\sloppy\setboolean{publ}{false}}{\baselineskip20pt\sloppy}

\begin{document}
\begin{bmcformat}


\title{On the combinatorics of sparsification}


\author{Fenix W.D. Huang$^1$%
         \email{Fenix W.D. Huang - fenixprotoss@gmail.com}
       and
         Christian M. Reidys\correspondingauthor$^1$%
         \email{Christian M. Reidys\correspondingauthor - duck@santafe.edu}%
      }


\address{%
    \iid(1)Department of Mathematic and Computer science, University of
    Southern Denmark, Campusvej 55, DK-5230 Odense M, Denmark
}%

\maketitle


\begin{abstract}
{\bf Background:} We study the sparsification of dynamic programming folding
algorithms of RNA structures. Sparsification applies to the mfe-folding of
RNA structures and can lead to a significant reduction of time complexity.

{\bf Results:}
We analyze the sparsification of a particular decomposition rule,
$\Lambda^*$, that splits an interval for RNA secondary and pseudoknot
structures of fixed topological genus.
Essential for quantifying the sparsification is the size of its so
called candidate set. We present a combinatorial framework which
allows by means of probabilities of irreducible substructures to
obtain the expected size of the set of $\Lambda^*$-candidates.
We compute these expectations for arc-based energy models via
energy-filtered generating functions (GF) for RNA secondary structures
as well as RNA pseudoknot structures.
For RNA secondary structures we also consider a simplified loop-energy
model.
This combinatorial analysis is then compared to the expected number
of $\Lambda^*$-candidates obtained from folding mfe-structures.
In case of the mfe-folding of RNA secondary structures with a simplified
loop energy model our results imply that sparsification provides a
reduction of time complexity by a constant factor of $91\%$ (theory)
versus a $96\%$ reduction (experiment).
For the ``full'' loop-energy model there is a reduction of
$98\%$ (experiment).

{\bf Conclusions:}
Our result show that the polymer-zeta property, describing the probability
of an irreducible structure over an interval of length $m$ does not hold
for RNA structures. As a result sparsification of the
$\Lambda^*$-decomposition rule does not lead to a linear reduction of the
set of candidates. We show that under general assumptions the expected number
of $\Lambda^*$-candidates is $\Theta(n^2)$, the constant reduction being in
the range of $95\%$.
The sparsification of the $\Lambda^*$-decomposition rule for RNA
pseudoknotted structures of genus $1$ leads to an expected number
of candidates of $\Theta(n^2)$.
The effect of sparsification is sensitive to the employed energy model.
\end{abstract}

\ifthenelse{\boolean{publ}}{\begin{multicols}{2}}{}



\section*{Background}

An RNA sequence is a linear, oriented sequence of the nucleotides (bases)
{\bf A,U,G,C}. These sequences ``fold'' by establishing bonds between
pairs of nucleotides. Bonds cannot form arbitrarily: a nucleotide can
at most establish one Watson-Crick base pair {\bf A-U} or {\bf G-C} or a
wobble base pair {\bf U-G}, and the global conformation of an RNA molecule
is determined by topological constraints encoded at the level of secondary
structure, i.e., by the mutual arrangements of the base pairs \cite{Bailor:10}.

Secondary structures can be interpreted as (partial) matchings in a graph of
permissible base pairs \cite{Tabaska:98}. They can be represented as diagrams,
i.e.~graphs over the vertices $1,\dots,n$, drawn on a horizontal line with
bonds (arcs) in the upper halfplane. In this representation one refers to a
secondary structure without crossing arcs as a {\it simple} secondary
structure and pseudoknot structure, otherwise, see Figure~\ref{F:RNAp}.

\begin{figure}[ht]
\begin{center}
\includegraphics[width=0.9\columnwidth]{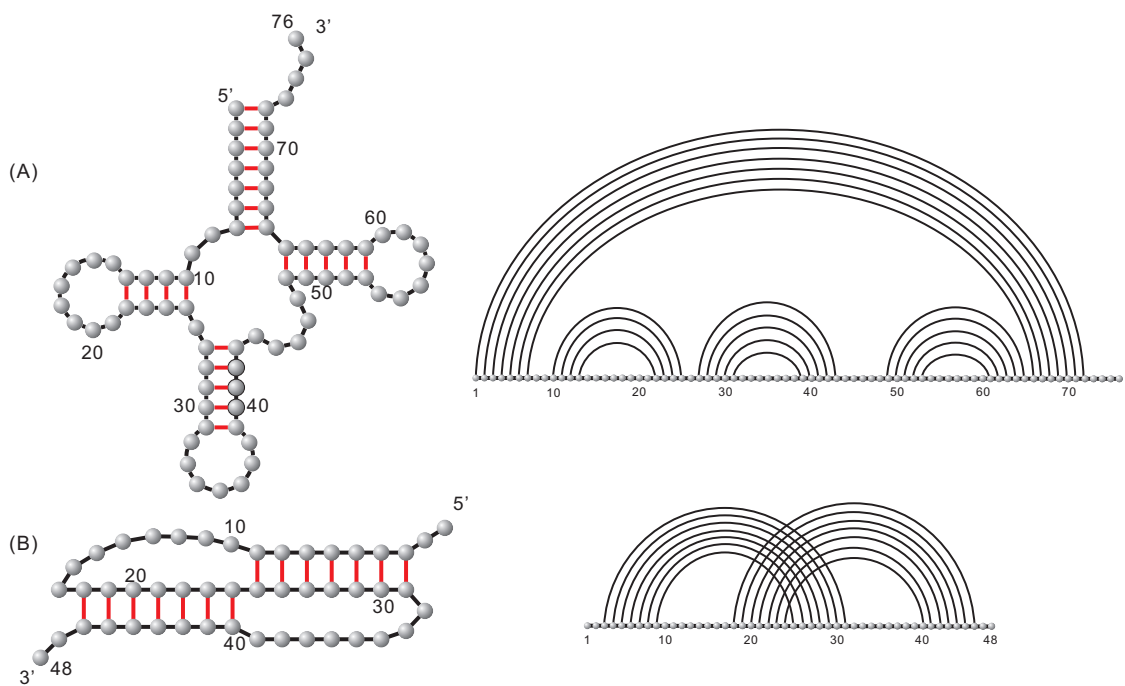}
\end{center}
\caption{\small RNA structures as planar graphs and diagrams.
(A) an RNA secondary structure and (B) an RNA pseudoknot structure. }
\label{F:RNAp}
\end{figure}

Folded configurations are energetically somewhat optimal. Here energy means
free energy, which is dominated by the loops forming between adjacent base
pairs and not by the hydrogen bonds of the individual base pairs
\cite{Mathews:99}.
In addition sterical constraints imply certain minimum
arc-length conditions for minimum free energy configurations
\cite{Waterman:78aa}.
In particular, only configurations without isolated bonds and without bonds
of length one (formed by immediately subsequent nucleotides) are observed in
RNA structures.
In this paper, optimize a problem we meas maximize
the score but not to minimize the free energy.

For a given RNA sequence polynomial-time dynamic programming (DP) algorithms
can be devised, finding such minimal energy configurations. The most commonly
used tools predicting simple RNA secondary structure \texttt{mfold}
\cite{Zuker:89} and the \texttt{Vienna RNA Package} \cite{Hofacker:94a},
are running at $O(N^2)$ space and $O(N^3)$ time solution.
In the following we omit ``simple'' and refer to secondary structures
containing crossing arcs as pseudoknot structures.

Generalizing the matrices of the DP-routines of secondary structure
folding \cite{Zuker:89,Hofacker:94a} to gap-matrices \cite{Rivas:99},
leads to a DP-folding of pseudoknotted structures \cite{Rivas:99}
(\texttt{pknot-R\&E})
with $O(n^4)$ space an $O(n^6)$ time complexity.
The following references provide a certainly incomplete list of
DP-approaches to RNA pseudoknot structure prediction using various
structure classes characterized in terms of recursion equations and/or
stochastic grammars:
\cite{Rivas:99,Uemura:99,Akutsu:00,Lynsoe:00,Cai:03,Dirks:03,Deogun:04,
Reeder:04,Li:05,Matsui:05,Kato:06,Chen:09,Reidys:11a}.
The most efficient algorithm for pseudoknot structures is \cite{Reeder:04}
(\texttt{pknotsRG}) having $O(n^2)$ space and $O(n^4)$ time complexity.
This algorithm however considers only a few types of pseudoknots.

RNA secondary structures are exactly structures of topological genus zero
\cite{Waterman:78a}. The topological classification of RNA structures
\cite{Orland:02,Bon:08,rnag3} has recently been translated into an
efficient DP algorithm \cite{Reidys:11a}. Fixing the topological genus
of RNA structures implies that there are only finitely many types, the
so called irreducible shadows \cite{rnag3}.

Sparsification is a method tailored to speed up DP-algorithms
predicting mfe-secondary structures \cite{spar:07,Backofen:11}.
The idea is to prune certain computation paths encountered in the
DP-recursions, see Figure~\ref{F:spar_sec}.
To make the key point, let us consider the case of RNA secondary structure
folding. Here sparsification reduces the DP-recursion paths to be based on so
called candidates. A candidate is in this case an interval, for which
the optimal solution cannot be written as a sum of optimal solutions
of sub-intervals, see Figure~\ref{F:cand}. Tracing back these candidates gives rise to
``irreducible'' structures and the crucial observation is here that these
irreducibles appear only at a low rate.
This means that there are only relatively few candidates, which in turn
implies a significant reduction in time and space complexity.

Sparsification has been applied in the context of RNA-RNA interaction
structures \cite{Backofen:08} as well as RNA pseudoknot structures
\cite{Backofen:10}. In difference to RNA secondary structures, however,
not every decomposition rule in the DP-folding of RNA pseudoknot
structures is amendable to sparsification.
By construction, sparsification can only be applied for
calculating mfe-energy structures. Since the computation of the partition
function \cite{McCaskill:90,Dirks:03} needs to take into account {\it all}
sub-structures, sparsification does not work.

\begin{figure}[ht]
\begin{center}
\includegraphics[width=0.55\columnwidth]{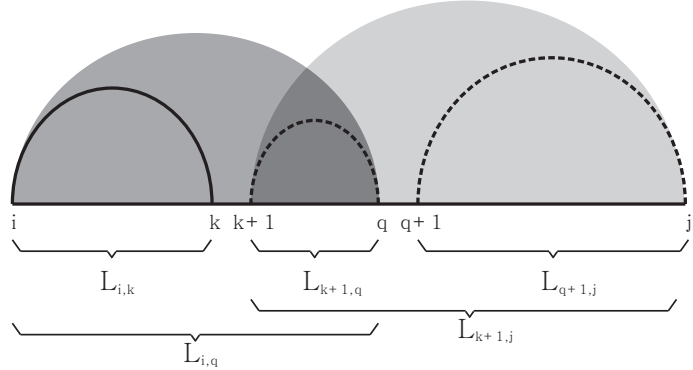}
\end{center}
\caption{\small Sparsification of secondary structure folding.
Suppose the optimal solution $L_{i,j}$ is obtained from the optimal solutions
$L_{i,k}$, $L_{k+1,q}$ and $L_{q+1,j}$. Based on the recursions of the secondary
structures, $L_{i,k}$ and $L_{k+1,q}$ produce an optimal solution of
$L_{i,q}$. Similarly, $L_{k+1,q}$ and $L_{q+1,j}$ produce an optimal solution
of $L_{k+1,j}$.
Now, in order to obtain an optimal solution of $L_{i,j}$ it is sufficient to
consider either the grouping $L_{i,q}$ and $L_{q+1,j}$ or $L_{i,k}$ and
$L_{k+1,j}$.}
\label{F:spar_sec}
\end{figure}

\begin{figure}[ht]
\begin{center}
\includegraphics[width=0.9\columnwidth]{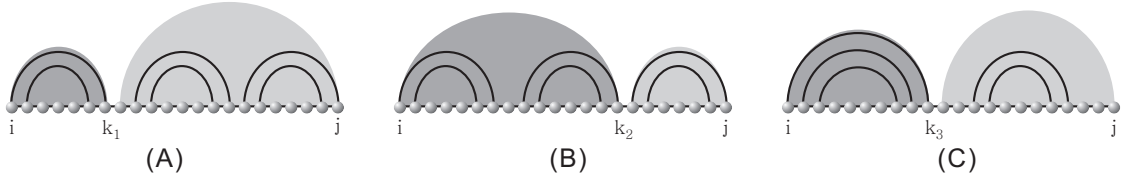}
\end{center}
\caption{\small
What sparsification can and cannot prune: (A) and (B) are two computation
paths yielding the same optimal solution. Sparsification reduces the
computation to path (A) where $S_{i,k_1}$ is irreducible.
(C) is another computation path with distinct leftmost irreducible over
a different interval, hence representing a new candidate that cannot be
reduced to (A) by the sparsification.}
\label{F:cand}
\end{figure}

For the mfe-folding of RNA secondary structures considerable attention has
been paid in order to validate that the set of candidates is small.
The idea here is that irreducibles are contained in short, ``rainbow''-like
arcs. To be precise, the gain is $O(n)$, if secondary structure satisfy
the so called {\it polymer-zeta property} \cite{Kafri:00,Kabakcioglu:05}:
The latter quantifies the probability of an arc of length $m$ to be
$\le b\; m^{-c}$, where $b>0$, $c>1$. Note that these arcs confine in
case of secondary structures irreducible structures, that is arcs and
irreducibility are tightly connected.

In pseudoknotted RNA structures however, we have crossing arcs and the
associated notion of irreducible structures differs significantly from
that of RNA secondary structures.
The polymer-zeta property is theoretically justified by means of modeling the
2D folding of a polymer chain as a self-avoiding walk (SAW) in a 2D lattice
\cite{Venderzande:98}.
More evidence of the polymer-zeta property for RNA secondary structures has
been collected via the NCBI database \cite{NCBI} of mfe-RNA structures.

In this paper we study the sparsification of the decomposition rule $\Lambda^*$
that splices an interval \cite{Backofen:10,Backofen:11} in the context of the
DP-folding of RNA pseudoknot structures of fixed topological genus. Our paper
provides a combinatorial framework to quantify the effects of sparsifying the
$\Lambda^*$-decomposition rule.

We shall prove that the candidate set \cite{spar:07,Backofen:10,Backofen:11}
is indeed small. Our argument is based on assuming a specific distribution of
irreducible structures within mfe-structures. Namely we assume these
irreducibles to appear with probability
${\bf f}^*(n,j)/{\bf f}(n,j)$,
where we assume $e$ to be a fixed parameter and ${\bf F}(z,e)=\sum
{\bf f}_{n,j}z^ne^j$ to be a bivariate (energy-filtered) generating
function whose associated generation function of irreducibles is
${\bf F}^*(z,e)=\sum {\bf f}^*_{n,j}z^ne^j$.

While this energy-filtration seems to be reparameterization of the notion
of ``stickiness'' \cite{Nebel:03}, it is really fundamentally different. This
becomes clear when considering loop-based energies which distinguishes
energy and arcs. Clearly when folding random sequences one weights the
latter around $6/16$, reminiscent of the probability of two given
positions to be compatible. The energy however is fairly independent
as it really depends on the particular loop-type.

We obtain these energy-filtered GFs also for RNA
pseudoknot structures of fixed topological genus. This provides new
insights into the improvements of the sparsification of the
concatenation-rule $\Lambda^*$ in the presence of cross serial
interactions.
Our observations complement the detailed analysis of Backofen
\cite{Backofen:10, Backofen:11}. We show that although for pseudoknot
structures of fixed topological genus \cite{Bon:08,rnag3} the effect of
sparsification on the global time complexity is still unclear, the
decomposition rule that splits an interval can be sped up significantly.

\subsection*{Sparsification}

The general idea of sparsification \cite{spar:07,Backofen:10,Backofen:11}
is following:
let ${V}=\{v_1,v_2,\ldots\}$ be a set whose elements $v_i$ are
unions of pairwise disjoint intervals. Let furthermore
$L_v$ denote an optimal solution (a positive number or score) of the DP-routine
over $v$. By assumption $L_v$ is recursively obtained.
Suppose the optimal solution $L_v$ is given by $L_v=L_{v_1}+L_{v_2}+L_{v_3}$,
where $v=v_1\dot\cup v_2\dot\cup v_3$.
Then, under certain circumstances, the DP-routine may interpret
$L_v$ either as $(L_{v_1}+L_{v_2})+L_{v_3}$ or as
$L_{v_1}+(L_{v_2}+L_{v_3})$, see Figure~\ref{F:spar_idea}.
To be precise, this situation is encountered iff
\begin{itemize}
\item there exists an optimal solution $L_{v_1'}$ for a sub-structure
over $v_1'$ where $v_1'=v_1\dot\cup v_2$ via $\Lambda_2$ and $L_v$ is
obtained from $L_{v_1'}$ and $L_{v_3}$ via $\Lambda_1$,
\item there exists an optimal solution $L_{v_2'}$ for a sub-structure over
$v_2'$ where $v_2'=v_2\dot\cup v_3$ via $\Lambda_3$ and $L_v$ is obtained
by $L_{v_1}$ and $L_{v_2'}$ via $\Lambda_1$.
\end{itemize}

Given a decomposition
$$
L_v=\underbrace{\underbrace{L_{v_1}+L_{v_2}}_{\Lambda_2}+L_{v_3}}_{\Lambda_1},
$$
we call $\Lambda_2$ $s$-compatible to $\Lambda_1$ if there exists
a decomposition rule $\Lambda_3$ such that
$$
L_v=\underbrace{L_{v_1}+\underbrace{L_{v_2}+L_{v_3}}_{\Lambda_3}}_{\Lambda_1}.
$$
Note that if $\Lambda_2$ is $s$-compatible to $\Lambda_1$ then
$\Lambda_3$ is $s$-compatible to $\Lambda_1$. To summarize

\begin{definition}{\bf ($s$-compatible)}
Suppose $L_v$ is the optimal solution for $S_v$ over $v$, $L_v=L_{v_1'}+L_{v_3}$
under decomposition rule $\Lambda_1$. $L_{v_1}$ is obtained from two optimal
solutions $L_{v_1}$ and $L_{v_2}$ under rule $\Lambda_2$. Then $\Lambda_2$ is
called {\it $s$-compatible} to $\Lambda_1$ if there exist some rule
$\Lambda_3$ such that $L_{v_2'}=L_{v_2}+L_{v_3}$ and $L_v=L_{v_1}+L_{v_2'}$.
\end{definition}

\begin{figure}[ht]
\begin{center}
\includegraphics[width=0.6\columnwidth]{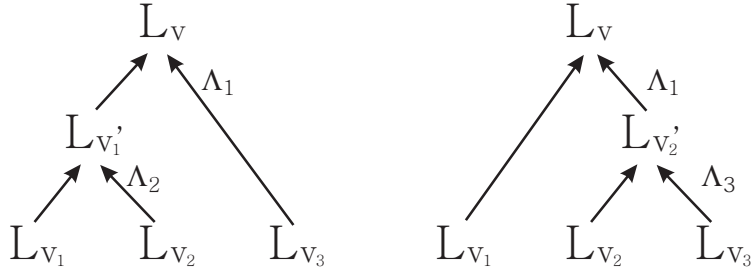}
\end{center}
\caption{\small Sparsification: $L_v$ is alternatively realized via
$L_{v_1}$ and $L_{v_2'}$, or $L_{v_1'}$ and $L_{v_3}$. Thus it is sufficient
to only consider one of the computation paths.}
\label{F:spar_idea}
\end{figure}

Figure~\ref{F:spar_idea} depicts two such ways that realize the same optimal
solution $L_v$. Sparsification prunes any such multiple computations of
the same optimal value.

We next come to the important concept of candidates. The latter mark the
essential computation paths for the DP-routine.

\begin{definition}{\bf (Candidates)}
Suppose $L_v$ is an optimal solution. We call $v$ is a $\Lambda$-{\it candidate}
if for any $v_1 \subsetneq v$ obtained by $\Lambda$ and $v=v_1\dot\cup v_2$,
we have
$$
L_v>L_{v_1}+L_{v_2}
$$
and we shall denote the set of $\Lambda$-candidates set by $Q^{\Lambda}$.
\end{definition}

\begin{lemma}\label{L:11}\cite{spar:07,Backofen:10}
Suppose $\Lambda_2$ is $s$-compatible to $\Lambda_1$ then any optimal solution
$L_v$ can be obtained via $\Lambda_2$-candidates.
\end{lemma}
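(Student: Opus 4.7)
The plan is to prove the lemma by a bottom-up rewriting argument on DP-derivation trees, using $s$-compatibility as a rotation that eliminates $\Lambda_2$-applications on non-candidates while preserving the optimum $L_v$. For any derivation tree $T$ producing $L_v$, let $w(T)$ denote the number of nodes of $T$ at which $\Lambda_2$ is applied to a sub-structure not in $Q^{\Lambda_2}$. If $w(T)=0$, then $T$ itself witnesses the lemma. Otherwise I would select an offending $\Lambda_2$-node $N$ of $T$ of maximal depth, so that no $\Lambda_2$-node strictly below $N$ is offending.

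At $N$ the local configuration of $T$ realizes
$$
L_v=\underbrace{\underbrace{L_{v_1}+L_{v_2}}_{\Lambda_2}+L_{v_3}}_{\Lambda_1},
$$
with $v_1'=v_1\dot\cup v_2 \notin Q^{\Lambda_2}$; the hypothesis that $\Lambda_2$ is $s$-compatible to $\Lambda_1$ supplies a rule $\Lambda_3$ satisfying $L_{v_2'}=L_{v_2}+L_{v_3}$ and $L_v=L_{v_1}+L_{v_2'}$. Using this identity I would rewrite the two-level sub-tree at $N$ and its $\Lambda_1$-parent as
$$
L_v=\underbrace{L_{v_1}+\underbrace{L_{v_2}+L_{v_3}}_{\Lambda_3}}_{\Lambda_1},
$$
yielding a new derivation $T'$ of value $L_v$ in which the offending $\Lambda_2$-application at $N$ has been replaced by a $\Lambda_3$-application, while the three subtrees producing $L_{v_1}$, $L_{v_2}$, $L_{v_3}$ are reattached verbatim. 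Since $N$ was of maximal depth the rotation introduces no new $\Lambda_2$-node below the modification point, and the single offending $\Lambda_2$-application at $N$ has been removed, so $w(T')\le w(T)-1$. Iterating this rotation finitely often produces a tree $T^\star$ with $w(T^\star)=0$, which is the desired derivation.

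The principal obstacle I anticipate is the bookkeeping around the rotation step, namely verifying that the rewritten tree $T'$ really contains no fresh non-candidate $\Lambda_2$-nodes. This is precisely what depth-maximality of $N$ controls: the rewrite alters only $N$ and its $\Lambda_1$-parent, and every $\Lambda_2$-node surviving inside the three reattached subtrees was already applied to a $\Lambda_2$-candidate before the rotation; nodes above $N$ in $T$ are left untouched, so the total offending count can only decrease. A secondary point, that the rotated configuration still computes the optimum rather than a merely feasible value, is the content of the $s$-compatibility hypothesis itself and needs no further argument.
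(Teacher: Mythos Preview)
The paper does not supply its own proof of this lemma; it is quoted from the cited references. Your argument must therefore stand on its own, and it has a genuine gap: the measure $w(T)$ does not deliver the conclusion. As the lemma is used in the paper, it asserts that whenever $L_v$ is realized by a $\Lambda_1$-split $L_v=L_{v_1'}+L_{v_3}$, one may choose such a split with $v_1'\in Q^{\Lambda_2}$. Your condition $w(T^\star)=0$ only says that no node of $T^\star$ applies $\Lambda_2$ to a non-candidate input; it says nothing about the left pieces at the $\Lambda_1$-nodes of $T^\star$. The root $\Lambda_1$-node of $T^\star$ may perfectly well split $v$ into some $v_1'\notin Q^{\Lambda_2}$ and $v_3$, with $L_{v_1'}$ computed inside $T^\star$ by a rule other than $\Lambda_2$; then $w(T^\star)=0$ and yet this split does not witness the lemma. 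A related unjustified step is your assumption that the deepest offending $\Lambda_2$-node $N$ sits as the left child of a $\Lambda_1$-parent; maximal depth alone does not force that configuration, so the rotation you invoke may simply be unavailable at $N$.

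The intended argument bypasses derivation trees and uses the size of the left piece of the $\Lambda_1$-split as the descent measure. Given any $\Lambda_1$-split $L_v=L_{v_1'}+L_{v_3}$ with $v_1'\notin Q^{\Lambda_2}$, the definition of non-candidate produces a $\Lambda_2$-split $v_1'=v_1\dot\cup v_2$ with $L_{v_1'}=L_{v_1}+L_{v_2}$ and $v_1\subsetneq v_1'$; $s$-compatibility then gives $L_v=L_{v_1}+L_{v_2'}$ via $\Lambda_1$. Iterating strictly shrinks the left piece and terminates at a $\Lambda_2$-candidate. Your rotation is exactly the right local move; the fix is to iterate it at the single top-level $\Lambda_1$-split with $\vert v_1'\vert$ as the measure, rather than inside a full derivation tree with $w$.
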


By construction a $\Lambda_2$-candidate $v$ is a union of disjoint intervals
such that its optimal solution $L_v$ cannot be obtained via a
$\Lambda_2$-splitting. This optimal solution allows to construct a non-unique
arc-configuration (sub-structure) over $v$ \cite{Zuker:89,Hofacker:94a} and
the above $\Lambda_2$-splitting consequently translates into a splitting of
this sub-structure.
This connects the notion of $\Lambda_2$-candidates with that of sub-structures
and shows that a $\Lambda_2$-candidate implies an sub-structure that is
$\Lambda_2$-irreducible.

In the case of sparsification of RNA secondary structures we have one basic
decomposition rule $\Lambda^*$ acting on intervals, namely $\Lambda^*$
splices an interval into two disjoint, subsequent intervals.
The implied notion of a $\Lambda^*$-irreducible sub-structure is that of a
sub-structure nested in an maximal arc, where maximal refers to the
partial order $(i,j)\le (i',j')$ iff $i'\le i\;\wedge j\le j'$.
This observation relates irreducibility to that or arcs and following this
line of thought \cite{spar:07} identifies a specific property of polymer-chains
introduced in \cite{Kafri:00,Kabakcioglu:05} to be of relevance for the size
of candidate sets:

\begin{definition} {\bf (Polymer-zeta property)}\label{D:polymer-zeta}
Let $P(i,j)$ denotes the probability of a structure over an interval
$[i,j]$ under some decomposition rule $\Lambda$. Then we say $\Lambda$
follows the polymer-zeta property if $P(i,j)=bm^{-c}$ for some constant
$b,c>0$.
\end{definition}

This property is theoretically justified by means of modeling the 2D folding
of a polymer chain as a self-avoiding walk (SAW) in a 2D lattice
\cite{Venderzande:98}.

\subsection*{RNA secondary structures}

In this section we recall some results of \cite{spar:07, Backofen:11} on
the sparsification of RNA secondary structures. Secondary structure satisfies
a simple recursion which gives the optimal solution over $[i,j]$ by
$L_{i,j}=\max\{V_{i,j}, W_{i,j}\}$,
where $V_{i,j}$ denotes the optimal solution in which $(i,j)$ is a base pair,
and $W_{i,j}$ denotes the optimal solution obtained by adding the optimal
solutions of two subsequent intervals, respectively. Note that the optimal
solution over a single vertex is denoted by $L_{i,i}$.
We have the recursion equation for $V_{i,j}$ and $W_{i,j}$:
\begin{eqnarray*}
(\Lambda_1)\quad \  V_{i,j}  & = &  L_{i+1,j-1}+ f(i,j), \\
(\Lambda_2) \quad W_{i,j} & = & \max_{i<k <j}\{L_{i,k}+L_{k+1,j}\},
\end{eqnarray*}
where $f(i,j)$ is the score when $(i,j)$ form a base pair, see
Figure.~\ref{F:sec}. In case two positions, $i$,$j$ in the sequence are
incompatible then we have $f(i,j)=-\infty$.

\begin{figure}[ht]
\begin{center}
\includegraphics[width=0.9\columnwidth]{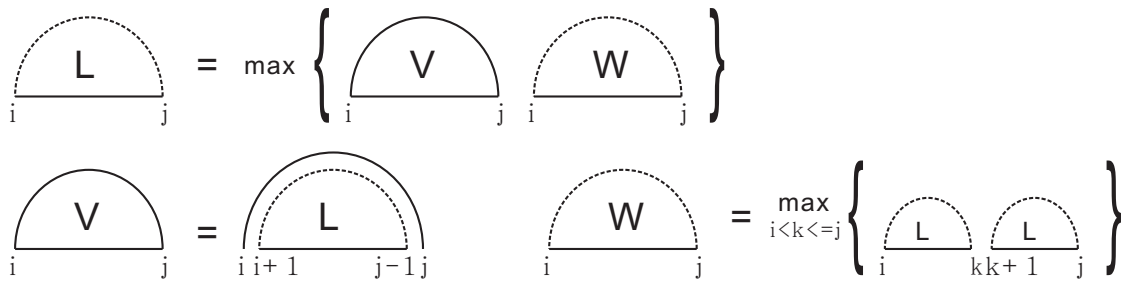}
\end{center}
\caption{\small The recursion solving the optimal solution for
secondary structures.}
\label{F:sec}
\end{figure}
An interval $[i,j]$ is a $\Lambda^*$-candidate if the optimal solution over
$[i,j]$ is given by $L_{i,j}=V_{i,j}>W_{i,j}$. Indeed, $[i,j]$ is a candidate
iff $[i,j]$ is in the candidate set of
$\Lambda^*$, and we denote the set $Q^{\Lambda^*}$ by $Q$.
Suppose the optimal solution $W_{i,j}$ is given by
$W_{i,j}=L_{i,q}+L_{q+1,j}$ and suppose we have $L_{i,q}=L_{i,k}+L_{k+1}$.
Then since $[i,q]$ is not a candidate, Lemma~\ref{L:11} shows that we
can compute $W_{i,j}=L_{i,k}+L_{k+1,j}$, where $[i,k]$ is a candidate.

Accordingly, the recursion for $W_{i,j}$ can be based on candidates,
i.e.~$W_{i,j}=\max_{[i,k]\in Q} \{L_{i,k}+L_{k+1,j}\}$.
Clearly, the bottleneck for computing the recursion is the calculation of
$W_{i,j}$, which requires $O(n^3)$ time. Applying sparsification, this
recursion is based on candidates $[i,k]$. Suppose we have $Z$ such
candidates, then the time complexity reduces to $O(nZ)$, since the
optimal solution is necessarily based on a candidate. Once the latter
is identified the expression $L_{k+1,j}$ requires only $O(n)$ time
complexity. In the worst case, $Q$ contains $O(n^2)$ elements.

The polymer-zeta property however implies that the expectation of $Z$
is given by $\sum_{i\ge 1}^n \sum_{j=i}b(j-i)^{-c}$ where $b$ and $c$ are
constants and $c>1$. We can conclude from the polymer-zeta property
that $Z=O(n)$ and accordingly the runtime reduces to $O(n)
\cdot O(n)=O(n^2)$.

\subsection*{RNA pseudoknot structures}

Sparsification can also be applied to the DP-algorithm folding RNA structures
with pseudoknots \cite{Backofen:10}. In contrast to the decomposition
rule $\Lambda^*$ that spliced an interval into two subsequent intervals,
we encounter in the grammar for pseudoknotted structures additional more
complex decomposition rules \cite{Rivas:99}.
As shown in \cite{Backofen:10} there exist some decomposition rules which
are not $s$-compatible and which can accordingly not be sparsified at all, see Figure~\ref{F:spar_g}.
For instance, given a decomposition rule $\Lambda$ in \texttt{pknot-R\&E}
subsequent decomposition rules which are $s$-compatible to $\Lambda$ are
referred to as split type of $\Lambda$ \cite{Backofen:10}.

In the following we will study RNA pseudoknot structures of fixed topological
genus, see Section~{\bf Diagrams, surfaces and some generating functions} for
details.
An algorithm folding such pseudoknot structures, \texttt{gfold}, has been
presented in \cite{Reidys:11a}. The decomposition rules that appear in
\texttt{gfold} are reminiscent to those of \texttt{pknot-R\&E} but as
they restrict the genus of sub-structures the iteration of gap-matrices
is severely restricted and the effect of sparsification of these
decompositions is significantly smaller.

\begin{figure}[ht]
\begin{center}
\includegraphics[width=0.9\columnwidth]{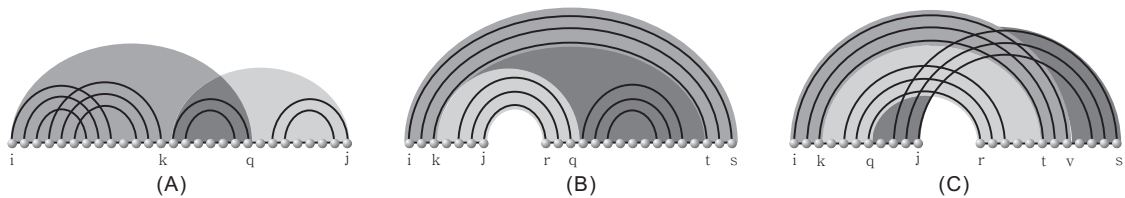}
\end{center}
\caption{\small Decomposition rules for pseudoknot structures of fixed genus.
(A) three decompositions via the rule $\Lambda^*$, which is $s$-compatible to
    itself.
    We show that for $\Lambda^*$ we obtain a linear reduction in time
    complexity.
(B) three decomposition rules $\Lambda_1,\Lambda_2,\Lambda_3$ where
    $\Lambda_2,\Lambda_3$ are $s$-compatible to $\Lambda_1$.
    A quantification of the candidate set is not implied by the
    polymer-zeta property.
(C) three decomposition rules $\Lambda_1,\Lambda_2,\Lambda_3$ where
    $\Lambda_2,\Lambda_3$ are not $s$-compatible to $\Lambda_1$.
}
\label{F:spar_g}
\end{figure}

In the following, we restrict our analysis to the decomposition rule
$\Lambda^*$ which splices an interval into two subsequent intervals.
Expressed in combinatorial language, $\Lambda^*$ cuts the backbone of
an RNA pseudoknot structure of fixed genus $g$ over one interval without
cutting a bond.


\section*{Methods}


\subsection*{Diagrams and genus filtration}

In this section we recall some facts about diagrams and pass from
diagrams to surfaces in order to be able to formulate what we mean
by an RNA pseudoknot structure of fixed genus $g$. Most of this
section is derived from \cite{Zagier:95,rnag3} with the exception of
Lemma~\ref{L:irr_rec} and Theorem~\ref{T:irr}, which are new
and key for the subsequent analysis of $\Lambda^*$-candidates.

A diagram is a labeled graph over the vertex set $[n]=\{1, \dots, n\}$ in
which each vertex has degree $\le 3$, represented by drawing its vertices
in a horizontal line. The backbone of a diagram is the sequence of
consecutive integers $(1,\dots,n)$ together with the edges $\{\{i,i+1\}
\mid 1\le i\le n-1\}$. The arcs of a diagram, $(i,j)$, where $i<j$, are
drawn in the upper half-plane. We shall distinguish the backbone edge
$\{i,i+1\}$ from the arc $(i,i+1)$, which we refer to as a $1$-arc.
A stack of length $\ell$ is a maximal sequence of ``parallel'' arcs,
$((i,j),(i+1,j-1),\dots,(i+(\ell-1),j-(\ell-1)))$
and is also referred to as a $\ell$-stack, see
Figure~\ref{F:diagram}.

\begin{figure}[ht]
\begin{center}
\includegraphics[width=0.6\columnwidth]{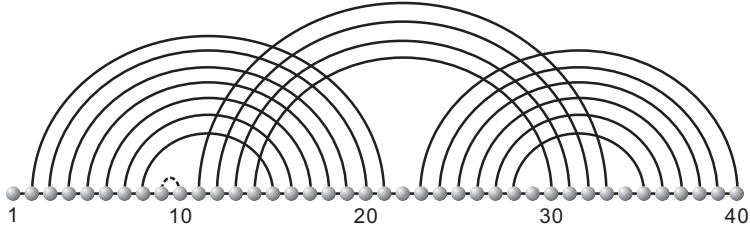}
\end{center}
\caption{\small RNA structures and diagram representation.
A diagram over $\{1,\ldots,40\}$. The arcs $(1,21)$ and
$(11,33)$ are crossing and the dashed arc $(9,10)$ is a $1$-arc
which is not allowed.
This structure contains $3$ stacks with length $7$, $4$ and $6$, from left to
right respectively.
}
\label{F:diagram}
\end{figure}

We shall consider diagrams as fatgraphs, $\mathbb{G}$, that is graphs $G$
together with a collection of cyclic orderings, called fattenings, one such
ordering on the half-edges incident on each vertex.
Each fatgraph $\mathbb{G}$ determines an oriented surface $F(\mathbb{G})$
\cite{Loebl:08,Penner:10} which is connected if $G$ is and has some
associated genus $g(G)\ge 0$ and number $r(G)\ge 1$ of boundary components.
Clearly, $F(\mathbb{G})$ contains $G$ as a deformation retract \cite{Massey:69}.
Fatgraphs were first applied to RNA secondary structures in
\cite{Waterman:93} and \cite{Penner:03}.

A diagram $\mathbb{G}$ hence determines a unique surface $F(\mathbb{G})$
(with boundary). Filling the boundary components with discs we can
pass from $F(\mathbb{G})$ to a surface without boundary. Euler
characteristic, $\chi$, and genus, $g$, of this surface is given by
$\chi =  v - e + r$ and $g  =  1-\frac{1}{2}\chi$, respectively, where
$v,e,r$ is the number of discs, ribbons and boundary components in
$\mathbb{G}$, \cite{Massey:69}.
The genus of a diagram is that of its associated surface without boundary and
a diagram of genus $g$ is referred to as $g$-diagram.

A $g$-diagram without arcs of the form $(i,i+1)$ ($1$-arcs) is called a
$g$-structure. A $g$-diagram that contains only vertices of degree three,
i.e.~does not contain any vertices not incident to arcs in the upper halfplane,
is called a $g$-matching.
A stack of length $\tau$ is a maximal sequence of
``parallel'' arcs,
$$
((i,j),(i+1,j-1),\dots,(i+\tau,j-\tau)).
$$

A diagram is called irreducible, if and only if it cannot be split into two
by cutting the backbone without cutting an arc.

Let ${\bf c}_g(n)$ and ${\bf d}_g(n)$ denote the number of $g$-matchings and
$g$-structures having $n$-arcs and $n$ vertices, respectively, with
GF
$$
{\bf C}_g(z)=\sum_{n=0}^{\infty}{\bf c}_g(n)z^n \quad \quad {\bf D}_g(z)=
\sum_{n=0}^{\infty}{\bf d}_g(n)z^n.
$$
The GF ${\bf C}_g(z)$ has been computed the context of the
virtual Euler characteristic of the moduli-space of curves in \cite{Zagier:95}
and ${\bf D}_g(z)$ can be derived from ${\bf C}_g(z)$ by means of symbolic
enumeration \cite{rnag3}. The GF of genus zero diagrams ${\bf C}_0(z)$ is
wellknown to be the GF of the Catalan numbers, i.e., the numbers of
triangulations of a polygon with $(n+2)$ sides,
$$
{\bf C}_0(z)={{1-\sqrt{1-4z}}\over{2z}}.
$$
As for $g\ge 1$ we have the following situation \cite{rnag3}
\begin{theorem}\label{T:genus}
Suppose $g\geq 1$. Then the following assertions hold\\
{\bf (a)} ${\bf D}_{g}(z)$ is algebraic and
\begin{eqnarray}\label{E:oho}
{\bf D}_{g}(z) & = & \frac{1}{z^2-z+1}\
                            {\bf C}_g\left(\frac{z^2}
                            {\left(z^{2}-z+1\right)^2}\right).
\end{eqnarray}
In particular, we have for some constant $a_g$ depending only on $g$
and $\gamma \approx 2.618$:
\begin{equation}
[z^n]{\bf D}_{g}(z)\sim a_g\,n^{3(g-\frac{1}{2})} \gamma^n.
\end{equation}
{\bf (b)} the bivariate GF of $g$-structures
over $n$ vertices, containing exactly $m$ arcs, ${\bf E}_{g}(z,t)$,
is given by
\begin{equation}\label{E:qwe}
{\bf E}_{g}(z,t) = \frac{1}{t z^2-z+1}{\bf D}_g\left(
\frac{t \; z^2}{(t\; z^2-z+1)^2}\right).
\end{equation}
\end{theorem}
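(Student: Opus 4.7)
\textit{Plan for (a).} My plan is to realise every $g$-structure as a $g$-matching with a controlled local inflation around each arc and in each backbone gap, reducing (a) to a symbolic-method substitution in ${\bf C}_g$. First I will show that contracting every maximal stack to a single arc and deleting every unpaired vertex turns a $g$-structure $S$ into a $g$-matching $M(S)$; the genus is preserved because collapsing parallel ribbons in the fatgraph $\mathbb{G}$ does not alter the associated surface $F(\mathbb{G})$. Conversely, every $g$-structure is recovered from its core matching by (i) expanding each arc of $M$ into a stack of some length $\ell\ge 1$, and (ii) inserting a possibly empty sequence of unpaired vertices in each of the $2k+1$ backbone gaps of $M$, the only constraint being that the resulting diagram contains no $1$-arc.

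\textit{Generating function and asymptotic.} Next, by a direct GF manipulation---supplemented, if necessary, by a small inclusion--exclusion correction for the $1$-arcs forced by innermost arcs of $M$---I will check that this inflation factorises completely: each arc of $M$ contributes a common local factor $\frac{z^2}{(z^2-z+1)^2}$ and each of the $2k+1$ gaps contributes $\frac{1}{z^2-z+1}$. Summing $\sum_k c_g(k)\,z^{2k}/(z^2-z+1)^{2k+1}$ is precisely the symbolic composition $\frac{1}{z^2-z+1}\,{\bf C}_g\!\left(\frac{z^2}{(z^2-z+1)^2}\right)$, which establishes (\ref{E:oho}). For the asymptotic I will apply Flajolet--Odlyzko singularity analysis: the dominant real singularity $\rho$ of ${\bf D}_g$ is the smallest positive solution of $\frac{z^2}{(z^2-z+1)^2}=\tfrac14$, i.e.\ of $z^2-3z+1=0$, so $\rho=(3-\sqrt5)/2$ and $\gamma=1/\rho=(3+\sqrt5)/2\approx 2.618$. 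Since $z^2-3z+1$ vanishes simply at $\rho$, we have $1-4u\sim c(\rho-z)$ linearly, so the singular expansion ${\bf C}_g(u)\sim\kappa_g(1-4u)^{-(3g-1/2)}$ underlying the Zagier asymptotic $[u^n]{\bf C}_g(u)\sim\kappa'_g\,n^{3g-3/2}4^n$ \cite{Zagier:95} propagates to ${\bf D}_g(z)\sim a_g(1-z/\rho)^{-(3g-1/2)}$ near $\rho$, and the transfer theorem yields $[z^n]{\bf D}_g(z)\sim a_g\,n^{3(g-1/2)}\gamma^n$.

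\textit{Part (b) and the main obstacle.} For (b) I will refine the inflation above by marking each arc with $t$; since a stack of length $\ell$ contains $\ell$ arcs, every factor $z^2$ in the per-arc gadget becomes $tz^2$, yielding the per-arc factor $\frac{tz^2}{(tz^2-z+1)^2}$ and per-gap factor $\frac{1}{tz^2-z+1}$, and the same symbolic composition produces (\ref{E:qwe}); specialising $t=1$ reproduces (\ref{E:oho}). The hard part throughout will be verifying the factorisation of the per-arc contribution: the ``no $1$-arc'' constraint is a priori non-local, since whether an innermost arc of $M$ forces a $1$-arc depends on the absence of nested arcs, and the crux is to show that with the right bookkeeping the net per-arc contribution factorises to the closed form $\frac{z^2}{(z^2-z+1)^2}$ independently of the nesting pattern of $M$.
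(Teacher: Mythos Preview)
The paper does not prove this theorem; it is imported from \cite{rnag3} with the remark that ${\bf D}_g(z)$ ``can be derived from ${\bf C}_g(z)$ by means of symbolic enumeration.'' Your general plan and your singularity analysis for the asymptotic are both on target, but the bijection has a genuine mismatch. The map ``contract every maximal stack and delete unpaired vertices'' lands in $g$-matchings with \emph{no two parallel arcs}, not in arbitrary $g$-matchings; yet you then sum against ${\bf c}_g(k)$, which counts all $g$-matchings. Already for $g=0$, $k=2$ there are two matchings but only one is stack-free, so the displayed sum $\sum_k {\bf c}_g(k)\,z^{2k}/(z^2-z+1)^{2k+1}$ is not what your bijection produces. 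A related warning sign is that $z^2/(z^2-z+1)^2 = z^2+2z^3+z^4-2z^5-\cdots$ has negative coefficients and so cannot be the ordinary generating function of any honest inflation gadget; the inclusion--exclusion you flag as a possible small correction is in fact the entire content of the identity.

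The route that actually yields (\ref{E:oho}) drops the stack contraction. Deleting only the unpaired vertices sends $g$-\emph{diagrams} (with $1$-arcs permitted) surjectively onto all $g$-matchings, giving the bivariate identity $\widetilde{\bf D}_g(x,y)=\frac{1}{1-y}\,{\bf C}_g\!\bigl(x/(1-y)^2\bigr)$, where $x$ marks arcs and $y$ marks unpaired vertices. The $1$-arc constraint is then handled by a second bijection: replacing each $1$-arc of a $g$-diagram by a single marked unpaired vertex is a genus-preserving bijection between $g$-diagrams and pairs ($g$-structure, subset of its unpaired vertices), which reads $\widetilde{\bf D}_g(x,y)={\bf D}_g^{\mathrm{biv}}(x,x+y)$. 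Solving gives ${\bf D}_g^{\mathrm{biv}}(x,y)=\frac{1}{1-y+x}\,{\bf C}_g\!\bigl(x/(1-y+x)^2\bigr)$; setting $(x,y)=(z^2,z)$ yields (\ref{E:oho}), and setting $(x,y)=(tz^2,z)$ yields part (b)---with ${\bf C}_g$ rather than ${\bf D}_g$ inside the composition, so (\ref{E:qwe}) as printed evidently contains a typo.
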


\subsection*{Irreducible $g$-structures}
In the context of $\Lambda^*$-candidates we observed that irreducible
substructures are of key importance. It is accordingly of relevance to
understand the combinatorics of these structures. To this end let
${\bf D}^*_g(z)=\sum_{n=0}^{\infty}{\bf D}^*_g(n)z^n$ denote the GF of
irreducible $g$-structures.

\begin{lemma} \label{L:irr_rec}
For $g\ge 0$, the GF ${\bf D}^*_g(z)$ satisfies the
recursion
\begin{eqnarray*}
{\bf D}^*_0(z) & = & 1-\frac{1}{{\bf D}_0(z)} \\
{\bf D}^*_g(z) & = & -\frac{({\bf D}^*_0(z)-1){\bf D}_g(z)+
\sum_{g_1=1}^{g-1}{\bf D}^*_{g_1}(z){\bf D}_{g-g_1}(z)}{{\bf D}_0(z)}.
\end{eqnarray*}
\end{lemma}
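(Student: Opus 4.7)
The plan is to derive both recursions by exhibiting a unique decomposition of every $g$-structure into its leftmost irreducible block followed by the (possibly empty) remainder, and then translating this bijection into a convolution identity among generating functions.

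First, I would establish the additivity of genus under backbone concatenation: if a diagram $\mathbb{D}$ on $[1,n]$ splits into sub-diagrams $\mathbb{D}_1$ on $[1,k]$ and $\mathbb{D}_2$ on $[k+1,n]$ with no arc crossing between the two blocks, then $g(\mathbb{D})=g(\mathbb{D}_1)+g(\mathbb{D}_2)$. Using the paper's formula $g=1-\tfrac{1}{2}(v-e+r)$, the fatgraph $\mathbb{D}$ differs from the disjoint union $\mathbb{D}_1\sqcup\mathbb{D}_2$ only in having one extra backbone edge $\{k,k+1\}$; this additional edge glues together exactly one pair of boundary components, so $v=v_1+v_2$, $e=e_1+e_2+1$, $r=r_1+r_2-1$, whence $\chi(\mathbb{D})=\chi_1+\chi_2-2$ and the two genera add.

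Second, every nonempty $g$-structure admits a unique leftmost irreducible block: take $k$ minimal so that no arc of $\mathbb{D}$ straddles positions $k$ and $k+1$. By minimality of $k$, the restriction $\mathbb{D}|_{[1,k]}$ is irreducible of some genus $g_1\in\{0,\ldots,g\}$, while the restriction $\mathbb{D}|_{[k+1,n]}$ is a (possibly empty) structure whose genus equals $g-g_1$ by step one. Translating this unique decomposition into generating functions, and accounting separately for the empty structure which contributes $1$ to ${\bf D}_0(z)$ alone, yields
\begin{equation*}
{\bf D}_g(z)\;=\;\delta_{g,0}\;+\;\sum_{g_1=0}^{g}{\bf D}^*_{g_1}(z)\,{\bf D}_{g-g_1}(z).
\end{equation*}

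Third, setting $g=0$ gives ${\bf D}_0 = 1+{\bf D}^*_0\,{\bf D}_0$, from which the first identity ${\bf D}^*_0=1-1/{\bf D}_0$ follows by rearrangement. For $g\ge 1$, I would split off the extreme terms $g_1=0$ and $g_1=g$ from the convolution, use $1-{\bf D}^*_0=1/{\bf D}_0$ to write ${\bf D}_g-{\bf D}^*_0{\bf D}_g=(1-{\bf D}^*_0){\bf D}_g={\bf D}_g/{\bf D}_0$, solve for ${\bf D}^*_g\,{\bf D}_0$, and divide by ${\bf D}_0$; after substituting $1/{\bf D}_0=-({\bf D}^*_0-1)$ this reproduces the claimed expression verbatim.

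The step I expect to require the most care is the genus-additivity claim, specifically the boundary-component bookkeeping that yields $r=r_1+r_2-1$. It is classical in the RNA topology literature and follows from a direct Euler characteristic computation, but it is the only nontrivial geometric input; once it is in hand, the remainder of the proof is a short symbolic manipulation of formal power series.
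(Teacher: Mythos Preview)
Your proof is correct and follows essentially the same approach as the paper: both rest on the identity that a $g$-structure is a (possibly empty) sequence of irreducible blocks whose genera sum to $g$, and then extract the $u^g$-coefficient from the resulting convolution. The only cosmetic difference is that the paper packages this via the bivariate series ${\bf D}(z,u)=1/(1-{\bf D}^*(z,u))$ and works with its reciprocal ${\bf F}=1/{\bf D}$, whereas you use the equivalent first-block form ${\bf D}_g=\delta_{g,0}+\sum_{g_1}{\bf D}^*_{g_1}{\bf D}_{g-g_1}$ directly; you also spell out the genus-additivity Euler-characteristic computation that the paper leaves implicit.
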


For a proof of Lemma~\ref{L:irr_rec}, see Section~{\bf Proofs}.

\begin{theorem} \label{T:irr}
For $g\ge 1$ we have\\
{\bf (a)} the GF of irreducible $g$-structures over $n$
vertices is given by
\begin{equation}
{\bf D}^*_g(z)=(z^2-z+1)\left(\frac{{\bf U}_g(u)}{(1-4u)^{3g-\frac{1}{2}}}+
\frac{{\bf V}_g(u)}{(1-4u)^{3g-1}}\right),
\end{equation}
where $u=\frac{z^2}{(z^2-z+1)^2}$, ${\bf U}_g(z)$ and ${\bf V}_g(z)$ are
both polynomials with lowest degree at least $2g$, and ${\bf U}_g(1/4)$,
${\bf V}_g(1/4)\neq 0$. In particular, for some constant $k_g>0$ and
$\gamma\approx 2.618$:
\begin{equation}
{\bf D}^*_g(n) \sim k_g n^{3(g-\frac{1}{2})} \gamma^n.
\end{equation}
{\bf (b)} the bivariate GF of irreducible $g$-structures
over $n$ vertices, containing exactly $m$ arcs, ${\bf E}^*_{g}(z,t)$, is given by
\begin{equation}\label{E:qw}
{\bf E}^*_{g}(z,t) = (t z^2-z+1)\left(\frac{{\bf U}_g(v)}{(1-4v)^{3g-\frac{1}{2}}}+
\frac{{\bf V}_g(v)}{(1-4v)^{3g-1}}\right),
\end{equation}
where $v=\frac{t z^2}{(t z^2-z+1)^2}$.
\end{theorem}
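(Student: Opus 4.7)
The plan is to prove both parts simultaneously by strong induction on $g$, using the recursion of Lemma~\ref{L:irr_rec} together with Theorem~\ref{T:genus}(a) and the known Zagier-type closed form ${\bf C}_g(u) = P_g(u)/(1-4u)^{3g-1/2}$ for a polynomial $P_g$ (for $g \ge 1$), which yields ${\bf D}_g(z)=\frac{P_g(u)}{(z^2-z+1)(1-4u)^{3g-1/2}}$ with $u=z^2/(z^2-z+1)^2$. The central algebraic fact, obtained by rationalising ${\bf C}_0(u)=(1-\sqrt{1-4u})/(2u)$, is
\begin{equation*}
\frac{1}{{\bf D}_0(z)}=\frac{(z^2-z+1)(1+\sqrt{1-4u})}{2},\qquad \frac{1}{{\bf D}_0(z)^2}=\frac{(z^2-z+1)^2\bigl(1-2u+\sqrt{1-4u}\bigr)}{2},
\end{equation*}
so every factor $1/{\bf D}_0$ contributes a controlled polynomial-plus-$\sqrt{1-4u}$ correction.

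First I would rewrite Lemma~\ref{L:irr_rec} as ${\bf D}^*_g={\bf D}_g/{\bf D}_0^{2}-(1/{\bf D}_0)\sum_{g_1=1}^{g-1}{\bf D}^*_{g_1}{\bf D}_{g-g_1}$, using ${\bf D}^*_0-1=-1/{\bf D}_0$. For the base $g=1$ only the first summand appears, and substituting the above identities gives directly ${\bf U}_1(u)=P_1(u)(1-2u)/2$ and ${\bf V}_1(u)=P_1(u)/2$, verifying the claimed form with the correct singular exponents $3g-1/2$ and $3g-1$. For the induction step, each ${\bf D}^*_{g_1}{\bf D}_{g-g_1}$ is a sum of two terms with denominator powers $3g-1$ and $3g-3/2$; multiplication by $1/{\bf D}_0$ produces four terms whose pole orders are all bounded by $3g-1/2$ (for the $\sqrt{1-4u}$-type) or $3g-1$ (for the rational type), and each lower-order pole can be padded to the standard denominator by multiplying the numerator by an appropriate nonnegative integer power of $(1-4u)$. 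Collecting terms with like denominators exhibits ${\bf U}_g$ and ${\bf V}_g$ as polynomials in $u$.

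The lower-degree bound $\deg_u \ge 2g$ is obtained by tracking minimum vertex counts: every irreducible $g$-structure carries at least $2g$ arcs, so $[z^{k}]{\bf D}^*_g=0$ for $k<4g$, which together with $u(z)\sim z^2$ at the origin forces ${\bf U}_g$ and ${\bf V}_g$ to vanish to order at least $2g$ at $u=0$; an induction on $g$ inside the recursion shows this bound is preserved under all operations performed above. The non-vanishing ${\bf U}_g(1/4),\,{\bf V}_g(1/4)\neq 0$ is the part I expect to be the main obstacle: it cannot be read off formally and requires either a positivity argument on coefficients or a direct extraction of the leading singular term at $u=1/4$, arguing inductively that the two types of singularities in ${\bf D}^*_g$ do not cancel. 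Once this is established, the asymptotic ${\bf D}^*_g(n)\sim k_g n^{3(g-1/2)}\gamma^n$ follows from standard singularity analysis, noting that $u(z)=1/4$ is equivalent to $z^2-z+1=\pm 2z$, whose relevant root $z=(3-\sqrt{5})/2$ yields $\gamma=(3+\sqrt{5})/2\approx 2.618$.

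For part (b), I would observe that replacing $z^2$ by $tz^2$ throughout every identity — equivalently, reading $v=tz^2/(tz^2-z+1)^2$ in place of $u$ — corresponds combinatorially to marking each arc by $t$, and is exactly the substitution relating ${\bf D}_g$ to ${\bf E}_g$ in Theorem~\ref{T:genus}(b). Since Lemma~\ref{L:irr_rec}, the closed form of ${\bf D}_g$, and the identity for $1/{\bf D}_0$ are all preserved verbatim under this substitution, the induction from part (a) transfers with the identical polynomials ${\bf U}_g,{\bf V}_g$ now evaluated at $v$, yielding \eqref{E:qw}.
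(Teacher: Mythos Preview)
Your approach is correct and closely parallels the paper's, but with one structural difference that matters precisely at the point you flag as the obstacle.

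The paper does not carry the induction on ${\bf D}^*_g$ directly. Instead it first \emph{unrolls} Lemma~\ref{L:irr_rec} into a closed expression involving only the ${\bf D}_j$, proving by induction the identity
\[
{\bf D}^*_{g}(z)=\frac{{\bf D}_{g}(z)}{{\bf D}_{0}(z)^{2}}
+\sum_{j=0}^{g-2}\frac{(-1)^{g+1-j}}{{\bf D}_{0}(z)^{g+1-j}}
\sum_{\sigma\in[g]_{g-j}}\prod_{i=1}^{g-j}{\bf D}_{\sigma_i}(z),
\]
where $[g]_k$ denotes compositions of $g$ into $k$ positive parts. Only then does it substitute ${\bf D}_j=P_j(u)/((z^2-z+1)(1-4u)^{3j-1/2})$ and expand $(1+\sqrt{1-4u})^{g+1-j}$ binomially, splitting into odd and even powers of $\sqrt{1-4u}$ to obtain ${\bf U}_g$ and ${\bf V}_g$. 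Your direct induction and this unrolled formula are of course equivalent; the lower-degree bound $\ge 2g$ comes out the same way in both (in the paper via $\sum_i 2\sigma_i=2g$, in yours via the inductive bookkeeping you describe).

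Where the unrolled form pays off is exactly the nonvanishing at $u=1/4$. Setting $u=1/4$ kills every factor $(1-4u)^{k}$ with $k\ge 1$ in the explicit expressions for ${\bf U}_g$ and ${\bf V}_g$, leaving
\[
{\bf U}_g(1/4)=\tfrac14 P_g(1/4),\qquad
{\bf V}_g(1/4)=\tfrac18\Bigl(4P_g(1/4)-\sum_{j=1}^{g-1}P_j(1/4)P_{g-j}(1/4)\Bigr),
\]
and both are then checked nonzero using the known closed form for $P_g(1/4)$ from \cite{rnag3}. In your inductive framework the same evaluation is accessible but less transparent: you would need to track ${\bf U}_{g_1}(1/4)$ and ${\bf V}_{g_1}(1/4)$ through the recursion, and the resulting two-term recurrences reproduce precisely the sums above. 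So the ``main obstacle'' you anticipate is not a positivity argument but a direct evaluation, and the paper's unrolled formula is designed to make that evaluation immediate. Your treatment of part~(b) and of the asymptotics matches the paper's.
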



We shall postpone the proof of Theorem~\ref{T:irr} to
Section~{\bf Proofs}.


\subsection*{The main result}

In Section~{\bf Sparsification} we observed that sparsification applies to
the decomposition rule $\Lambda^*$, which effectively splices off an
irreducible sub-structure (diagram). This notion of $\Lambda^*$-irreducibility
is indeed compatible by the notion of combinatorial irreducibility introduced
in Section~{\bf Diagrams, surfaces and some generating functions}, see
Figure~\ref{F:diagram10}.
An optimal solution for the original structure is obtained from an optimal
solution of the spliced, $\Lambda^*$-{irreducible}, sub-structure and an
optimal solution for the remaining sub-structure.

\begin{figure}[ht]
\begin{center}
\includegraphics[width=0.7\columnwidth]{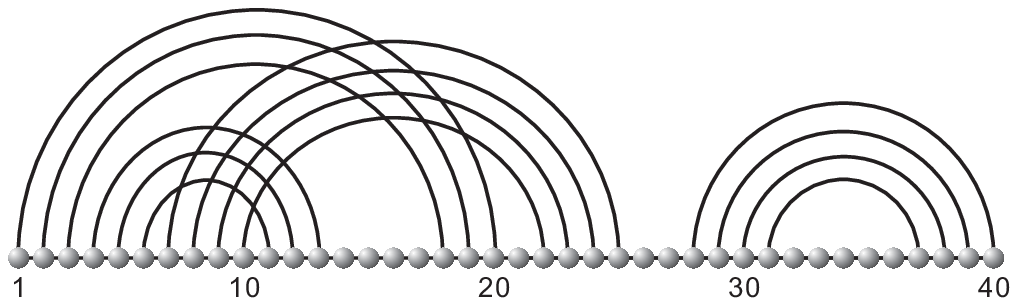}
\end{center}
\caption{\small Irreducibility relative to a decomposition rule:
the rule $\Lambda^*$ splitting $S_{i,j}$ to $S_{i,k}$ and
$S_{k+1,j}$, $S_{1,40}$ is not $\Lambda^*$-irreducible, while $S_{1,25}$ and
$S_{28,40}$ are.
However, for the decomposition rule $\Lambda_2$, which removes the
outmost arc, $S_{28,40}$ is not $\Lambda_2$-irreducible while $S_{1,25}$ is. }
\label{F:diagram10}
\end{figure}

Folded configurations are energetically optimal and dominated by the
stacking of adjacent base pairs \cite{Mathews:99}, as well as minimum
arc-length conditions \cite{Waterman:78aa} discussed before.

In the following we mimic some form of minimum free energy $g$-structures:
inspired by the Nussinov energy model \cite{Nussinov:78} we consider the
weight of a $g$-structure over $n$ vertices to be given by $\eta^\ell$, where
$\ell$ is the number of arcs for some $\eta\ge 1$ \cite{Nebel:03}.
Note that the case $\eta=1$
corresponds to the uniform distribution, i.e.~all $g$-structure have identical
weight.

This approach requires to keep track of the number of arcs, i.e.~we need
to employ bivariate GF. In Theorem~\ref{T:genus} {\bf (b)} we computed
this bivariate GF and in Theorem~\ref{T:irr} {\bf (b)} we derived from
this bivariate GF ${\bf E}^*_g(z,t)$, the GF of irreducible $g$-structures
over $n$ vertices containing $\ell$ arcs.

The idea now is to substitute for the second indeterminant, $t$,
some fixed $\eta\in \mathbb{R}$. This substitution induces the
formal power series
$$
{\bf D}_{g,\eta}(z)={\bf E}_{g}(z,\eta),
$$
which we regard as being parameterized by $\eta$.
Obviously, setting $\eta=1$ we recover ${\bf D}_g(z)$, i.e.~we have
${\bf D}_{g}(z)={\bf D}_{g,1}(z)={\bf E}_{g}(z,1)$. Note that for
$\eta>1/4$, the polynomial $\eta z^2-z+1$ has no real root.
Thus we have for $\eta>1/4$ the asymptotics
\begin{equation} \label{E:eta_sim}
{\bf d}_{g,\eta}(n)\sim a_{g,\eta} n^{3(g-\frac{1}{2})} \gamma_\eta^n \quad
\text{and}\quad
{\bf d}^*_{g,\eta}(n)\sim k_{g,\eta} n^{3(g-\frac{1}{2})} \gamma_\eta^n,
\end{equation}
with identical exponential growth rates as long as the supercritical
paradigm \cite{Flajolet:07a} applies, i.e.~as long as
$\gamma_\eta$, the real root of minimal modulus of
$$
\left(\frac{\eta \; z^2}{(\eta\; z^2-z+1)^2}\right)=\frac{1}{4},
$$
is smaller than any singularity of $\frac{1}{\eta z^2-z+1}$.
In this situation $\eta$ affects the constant $a_{g,\eta}$ and the
exponential growth rate $\gamma_{\eta}$ but {\it not} the sub-exponential
factor $n^{3(g-\frac{1}{2})}$. The latter stems from the singular expansion of
${\bf C}_g(z)$. Analogously, we derive the $\eta$-parameterized family of
GF ${\bf D}^*_{g,\eta}(z)={\bf E}^*_{g}(z,\eta)$.
Assuming a random sequence has on average a probability at most $6/16$ to
form a base pair we fix in the following $\eta=6e/16\approx 1.0125$,
where $e$ is the Euler number. By abuse of notation we will omit the
subscript $\eta$ assuming $\eta=6e/16$.

The main result of this section is that the set of $\Lambda^*$-candidates
is small. To put this size into context we note that the total number of
entries considered for the $\Lambda^*$-decomposition rule is given by
$$
\Omega(n)=\sum_{m=1}^n(n-m+1).
$$
\begin{theorem}\label{T:eumel1}
Suppose an mfe $g$-structure over an interval of length $m$ is irreducible
with probability ${\bf d}^*_g(m)/{\bf d}_g(m)$, then the expected number
of candidates of $g$-structures for sequences of lengths $n$ satisfies
$$
\mathbb{E}_g(n)= \Theta(n^2)
$$
and furthermore, setting $\overline{\mathbb{E}}_g(n)=\mathbb{E}_g(n) /
\Omega(n)$ we have
$$
\overline{\mathbb{E}}_g(n) \sim  {\bf d}^*_{g}(n)/{\bf d}_g(n) \sim b_g,
$$
where $b_g>0$ is a constant.
\end{theorem}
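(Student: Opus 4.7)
The plan is to reduce the expectation to a weighted sum of ratios of arc-counts and then evaluate it via the asymptotic analysis from Theorem~\ref{T:irr}. By linearity of expectation, since the candidate set consists of exactly those intervals whose mfe substructure is $\Lambda^*$-irreducible, the stated hypothesis yields the closed form
\begin{equation*}
\mathbb{E}_g(n) \;=\; \sum_{m=1}^n (n-m+1)\,\frac{{\bf d}^*_g(m)}{{\bf d}_g(m)},
\end{equation*}
obtained by grouping the $n-m+1$ subintervals of length $m$ in $[1,n]$. The task then reduces to controlling the sequence $\rho_g(m):={\bf d}^*_g(m)/{\bf d}_g(m)$ for large $m$.

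Next I would feed in the asymptotics of Theorem~\ref{T:irr}{\bf (b)} and Theorem~\ref{T:genus}{\bf (b)}, evaluated at the energy-weight $\eta=6e/16$. Since $\eta>1/4$, the polynomial $\eta z^2-z+1$ has no real root, so both substituted bivariate generating functions inherit their dominant singularity from the branch point supplied by ${\bf C}_g$; the supercritical paradigm applies and, by (\ref{E:eta_sim}), both coefficients have the form $c\,m^{3(g-1/2)}\gamma_\eta^m$ with identical exponential rate and identical sub-exponential order. Consequently $\rho_g(m)\to k_g/a_g=:b_g>0$, and the convergence is in fact at polynomial rate in $1/m$, because the quotient of the singular parts of ${\bf E}^*_g(z,\eta)$ and ${\bf E}_g(z,\eta)$ extends analytically across the dominant singularity.

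Finally, I would apply a Cesaro-type bound to the sum. Splitting at some fixed threshold $M$, the tail $m\le M$ contributes at most $O(Mn)=O(n)$, and for $m>M$ one has $|\rho_g(m)-b_g|<\varepsilon$, whose integral against the weights $(n-m+1)$ equals $(b_g\pm\varepsilon)\Omega(n)(1+o(1))$. Because $\Omega(n)=n(n+1)/2=\Theta(n^2)$, this simultaneously delivers $\mathbb{E}_g(n)=\Theta(n^2)$ and $\overline{\mathbb{E}}_g(n)=\mathbb{E}_g(n)/\Omega(n)\sim b_g$, which are the two claims.

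The main obstacle I anticipate is not the structural step, which is essentially a Cesaro average, but the careful justification that $\rho_g(m)$ converges with sufficient uniformity to be integrated against the triangular weights. This hinges on the observation that the singular expansions of the two generating functions in Theorems~\ref{T:genus}{\bf (b)} and \ref{T:irr}{\bf (b)} share the same square-root branch-point structure at $v=1/4$; once this is spelled out by standard singularity analysis, the limit $b_g=k_g/a_g$ and an effective error bound follow, and the rest of the argument is mechanical.
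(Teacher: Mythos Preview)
Your proposal is correct and follows essentially the same route as the paper: express $\mathbb{E}_g(n)$ as $\sum_{m}(n-m+1)\rho_g(m)$, invoke the asymptotics (\ref{E:eta_sim}) to get $\rho_g(m)\to b_g$, then split the sum at a finite threshold and use a Ces\`aro-type estimate against the triangular weights to obtain both $\Theta(n^2)$ and the limit of $\overline{\mathbb{E}}_g(n)$. The only cosmetic differences are that you write the expectation as an equality (the paper states it as an inequality, though under the hypothesis equality is what is meant) and that your remark on the polynomial rate of convergence of $\rho_g(m)$ is more than the argument actually needs---mere convergence $\rho_g(m)\to b_g$ already suffices for the Ces\`aro step.
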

We provide an illustration of Theorem~\ref{T:eumel1} in Figure~\ref{F:Ecand}.

\begin{figure}[ht]
\begin{center}
\includegraphics[width=0.7\columnwidth]{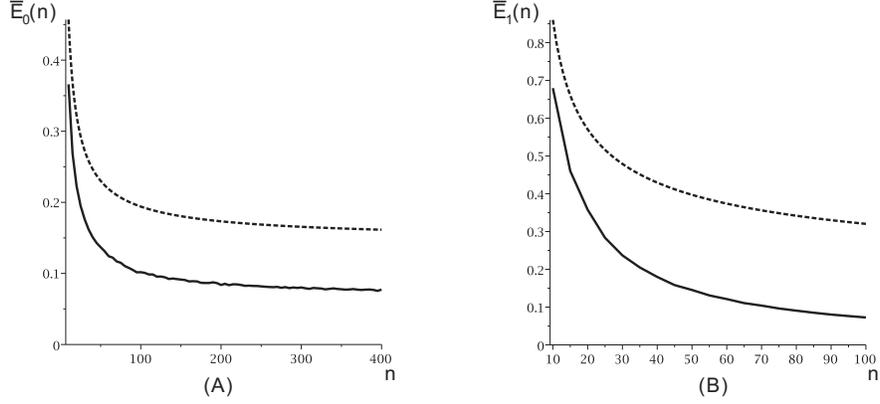}
\end{center}
\caption{\small
The expected number of candidates for secondary and $1$-structures,
$\overline{\mathbb{E}}_{0}(n)$ and $\overline{\mathbb{E}}_{1}(n)$:
we compute the expected number of candidates obtained by folding
$100$ random sequences for secondary structures (A)(solid)
and $1$-structures (B)(solid). We also display the theoretical expectations
implied by Theorem~\ref{T:eumel1} (A)(dashed) and (B)(dashed).
}
\label{F:Ecand}
\end{figure}

\begin{proof}

We proof the theorem by quantifying the probability of $[i,j]$ being a
$\Lambda^*$-candidate. In this case any (not necessarily
unique) sub-structure, realizing the optimal solution $L_{i,j}$, is
$\Lambda^*$-irreducible, and therefore an irreducible structure over $[i,j]$.

Let $m=(j-i+1)$, by assumption, the probability that $[i,j]$ is a
candidate conditional to the existence of a substructure over $[i,j]$
is given by
\begin{equation}\label{E:P1}
\mathbb{P}_{*}([i,j] \mid [i,j] \ \text{\rm is a candidate}\ )=
\frac{{\bf d}^*_{g}(m)}{{\bf d}_{g}(m)},
\end{equation}
Note that $\mathbb{P}_{*}([i,j] \mid [i,j] \ \text{\rm is a candidate}\ )$
does not depend on the relative location of the interval but only on the
interval-length. Let $\mathbb{P}_{g}(m)={\bf d}^*_{g}(m)/{\bf d}_{g}(m)$,
then according to Theorem~\ref{T:genus},
\begin{eqnarray*}
(1-\epsilon) a_{g} m^{3(g-\frac{1}{2})} \gamma^m  & \le &
{\bf d}_{g}(m) \ \le \ (1+\epsilon) a_{g} m^{3(g-\frac{1}{2})} \gamma^m, \\
(1-\epsilon) k_{g} m^{3(g-\frac{1}{2})} \gamma^m  & \le &
{\bf d}^*_{g}(m)\  \le \ (1+\epsilon) k_{g} m^{3(g-\frac{1}{2})} \gamma^m,
\end{eqnarray*}

for $m\ge m_0$ where $m_0>0$ and $0<\epsilon<1$ are constants. On the one
hand
\begin{equation}
\mathbb{P}_{g}(m) = \frac{{\bf d}^*_{g}(m)}{{\bf d}_{g}(m)}
\le \frac{(1+\epsilon)a_{g} m^{3(g-\frac{1}{2})}
\gamma^m}{(1-\epsilon) k_{g} m^{3(g-\frac{1}{2})} \gamma^m}
= (1+\epsilon') \frac{a_{g}}{k_{g}}= (1+\epsilon')b_{g},
\end{equation}
where $b_g=a_g/k_g>0$ is a constant.
On the other hand, we have
\begin{equation}
\mathbb{P}_{g}(m) = \frac{{\bf d}^*_{g}(m)}{{\bf d}_{g}(m)}
\ge \frac{(1-\epsilon)a_{g} m^{3(g-\frac{1}{2})} \gamma^m}{(1+\epsilon)
k_{g} m^{3(g-\frac{1}{2})} \gamma^m}
= (1-\epsilon'') \frac{a_{g}}{k_{g}} = (1-\epsilon'') b_{g}.
\end{equation}
Setting $\epsilon=\max\{\epsilon', \epsilon''\}$, we can conclude that
$\mathbb{P}_{g}(m) \sim {\bf d}^*_g(m)/{\bf d}_g(m)$, see Fig.~\ref{F:prob}.
\begin{figure}[ht]
\begin{center}
\includegraphics[width=0.7\columnwidth]{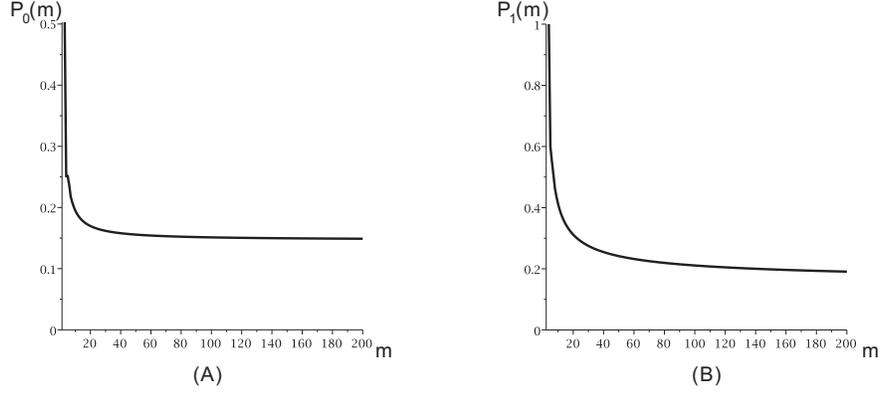}
\end{center}
\caption{\small The probability distribution of $\mathbb{P}_{0}(m)$ (A) and
$\mathbb{P}_{1}(m)$ (B). }
\label{F:prob}
\end{figure}

We next study the expected number of candidates over an interval of
length $m$. To this end let
$$
X_{m}=
\vert\{[i,j]\mid [i,j] \ \text{\rm is a $\Lambda^*$-candidate of
length $m$} \, \}\vert.
$$
The expected cardinality of the set of $\Lambda^*$-candidates of
length $m=(j-i+1)$ encountered in the DP-algorithm is given by
\begin{eqnarray*}
\mathbb{E}_g(X_{m}) & \le & \, (n-(m-1))\, \mathbb{P}_{g}(m),
\end{eqnarray*}
since there are $n-(m-1)$ starting points for such an interval $[i,j]$.
Therefore, by linearity of expectation, for sufficiently
large $m>m_0$, $\mathbb{P}_{g}(m) \le (1+\epsilon)b_{g}$ with
$\epsilon$ being a small constant. Thus we have
\begin{equation}
 \mathbb{E}_g(n)=\mathbb{E}_g(\sum_m X_{m}) \le  \sum_{m=1}^{m_0} (n-m+1) \mathbb{P}_{g}(m)+
 (1+\epsilon)b_{g} \sum_{m=m_0}^n (n-m+1).
\end{equation}
Consequently, the expected size of the $\Lambda^*$-candidate set is
$\Theta(n^2)$.
We proceed by comparing the expected number of candidates of a sequence
with length $n$ with $\Omega(n)$,
\begin{eqnarray*}
\frac{\mathbb{E}_g(n)}{\Omega(n)} & \le & \frac{\sum_{m=1}^{m_0} (n-m+1) \mathbb{P}_{g}(m)+
 (1+\epsilon)b_{g} \sum_{m=m_0}^n (n-m+1)}{\sum_{m=1}^n(n-m+1)} \\
 & \le & (1+\epsilon)b_{g}+\frac{\sum_{m=1}^{m_0}(\mathbb{P}_{g}(m)-(1+\epsilon)b_{g})(n-m+1)}
 {\sum_{m=1}^n(n-m+1)} \\
 & \le & (1+\epsilon)b_{g}+\frac{k\cdot n}{n^2}.
\end{eqnarray*}
For sufficient large $n\ge n_0$, $\mathbb{E}_g(n)/ \Omega(n)\le
(1+\epsilon')b_{g}$. Furthermore
\begin{eqnarray*}
\frac{\mathbb{E}_g(n)}{\Omega(n)}
\ge  \frac{\sum_{m=1}^{m_0} (n-m+1) \mathbb{P}_{g}(m)+
 (1-\epsilon)b_{g} \sum_{m=m_0}^n (n-m+1)}{\sum_{m=1}^n(n-m+1)}
 \ge  (1-\epsilon)b_{g},
\end{eqnarray*}
from which we can conclude $\mathbb{E}_g(n)/\Omega(n) \sim {\bf d}^*_g(m)/{\bf d}_g(m) \sim b_{g}$
and the theorem is proved.
\end{proof}

\subsubsection*{Loop-based energies}
In this section we discuss the more realistic loop-based energy model of
RNA secondary structure folding. To be precise we evoke here instead of
two trivariate GFs
${\bf F}(z,t,v)$ and ${\bf F}^*(z,t,v)$ counting secondary structures over
$n$ vertices that filter energy and arcs.

This becomes necessary since the loop-based model distinguishes between
arcs and energy. The ``cancelation'' effect or reparameterization of
stickiness \cite{Nebel:03}
to which we referred to before does not appear in this context.
Thus we need both an arc- as well as an energy-filtration.

A further complication emerges. In difference to the GFs ${\bf E}_g(z,t)$ and
${\bf E}^*_g(z,t)$ the new GFs are not simply obtained by formally
substituting $(t z^2/((t z^2-z+1)^2)$ into the power series
${\bf D}_g(z)$ and ${\bf D}^*_g(z)$ as bivariate terms. The more complicated
energy model requires a specific recursion for irreducible secondary
structures.

The energy model used in prediction secondary structure is more
complicated than the simple arc-based energy model. Loops which are
formed by arcs as well as isolated vertexes between the arcs are
considered to give energy contribution.
Loops are categorized as hairpin loops (no nested arcs), interior loops
(including bulge loops and stacks) and multi-loops
(more than two arc nested), see Figure~\ref{F:Loops}.
An arbitrary secondary structure can be uniquely decomposed into a collection
of mutually disjoint loops. A result of the particular energy parameters
\cite{Mathews:99} is that the energy model prefers interior loops, in
particular stacks (no isolated vertex between two parallel arc),
and disfavors multi-loops. Base on this observation, we give a simplified
energy model for a loop $\lambda$ contained in secondary structure by
\begin{itemize}
\item $f(\lambda)=-0.5$ if $\ell$ is a hairpin loop,
\item $f(\lambda)=1$ if $\ell$ is an interior loop,
\item $f(\lambda)=-5$ if $\ell$ is a multi-loop,
\end{itemize}
where $\lambda$ is a loop. The weight for a secondary structure $\delta$
accordingly is given by
\begin{equation}\label{E:loop}
f(\delta)=\sum_{\lambda \in \delta} f(\lambda).
\end{equation}

\begin{figure}[ht]
\begin{center}
\includegraphics[width=0.7\columnwidth]{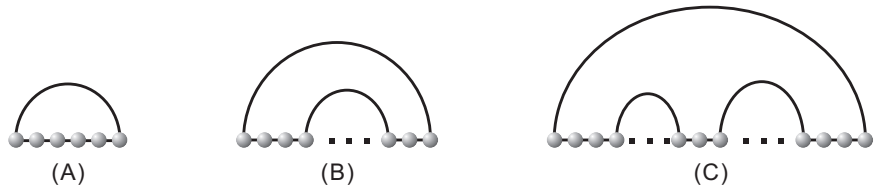}
\end{center}
\caption{\small Diagram representation of loop types: (A) hairpin loop,
(B) interior loop, (C) multi-loop. }
\label{F:Loops}
\end{figure}

Let ${\bf F}^*_0(z)$ and ${\bf F}_0(z)$ be the GFs obtained by setting
$t=e$ and $v=6/16$ in ${\bf F}^*(z,t,v)$ and ${\bf F}(z,t,v)$, where
$e$ is the Euler number. This means we find a suitable parameterization
which brings us back to a simple univariate GF.

\begin{lemma}\label{L:eumel3}
The weight function of RNA secondary structures, ${\bf F}^*_0(z)$,
satisfies
\begin{equation}\label{E:sec_rec_loop}
{\bf F}^*_0(z)=\frac{6}{16} e^{0.5} z^2\frac{z}{1-z}+\frac{6}{16}e^{1} z^2 \left(\frac{1}{1-z}\right)^2
{\bf F}^*_0(z)+\frac{6}{16}e^{-5} z^2 \frac{\left({\bf F}_0^*(z)\frac{1}{1-z}\right)^2}{1-{\bf F}_0^*(z)\frac{1}{1-z}}\frac{1}{1-z}.
\end{equation}
and ${\bf F}^*(z)$ is uniquely determined by the above equation. Furthermore
\begin{equation}
{\bf F}_0(z)=\frac{1}{1-z}\frac{1}{1-{\bf F}_0^*(z)\frac{1}{1-z}}.
\end{equation}
\end{lemma}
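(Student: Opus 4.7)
The plan is to derive both identities by the symbolic method, decomposing an irreducible secondary structure according to the unique loop closed by its outermost arc, and decomposing a general secondary structure as a sequence of unpaired vertices interleaved with irreducibles.

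First I would observe that if $\delta$ is an irreducible secondary structure over $[1,n]$, then the backbone cannot be severed without cutting an arc, which forces the existence of a rainbow arc $(1,n)$; this rainbow closes a unique loop $\lambda$ which is either a hairpin, an interior loop, or a multi-loop. Every outermost arc contributes the factor $z^2$ for its two endpoints together with the weight $\tfrac{6}{16}$ (the compatibility probability of two random bases) and the loop-energy weight $e^{f(\lambda)}$ induced by the substitution $t=e$ in the trivariate GF ${\bf F}^*(z,t,v)$ at $v=6/16$. I would then carry out the three cases in parallel:
\begin{itemize}
\item If $\lambda$ is a hairpin, the interval $(1,n)$ contains only isolated vertices, whose enumeration is $\tfrac{z}{1-z}$ (a non-empty sequence of unpaired vertices), producing the summand $\tfrac{6}{16}e^{0.5}z^2\tfrac{z}{1-z}$.
\item If $\lambda$ is an interior loop, there is exactly one immediately nested arc $(i',j')$, with unpaired vertices possibly on each side (two independent sequences counted by $\tfrac{1}{1-z}$ each), and the sub-structure on $[i',j']$ is itself irreducible (its outermost arc is $(i',j')$), yielding $\tfrac{6}{16}e^{1}z^2\bigl(\tfrac{1}{1-z}\bigr)^2{\bf F}^*_0(z)$.
\item If $\lambda$ is a multi-loop, the interior of $(1,n)$ decomposes as a sequence of unpaired vertices followed by $k\ge 2$ blocks, each block being an irreducible sub-structure followed by a (possibly empty) run of unpaired vertices; this SEQ-construction produces $\tfrac{1}{1-z}\sum_{k\ge 2}\bigl({\bf F}^*_0(z)\tfrac{1}{1-z}\bigr)^k=\tfrac{1}{1-z}\cdot\tfrac{\bigl({\bf F}^*_0(z)\tfrac{1}{1-z}\bigr)^2}{1-{\bf F}^*_0(z)\tfrac{1}{1-z}}$, weighted by $\tfrac{6}{16}e^{-5}z^2$.
\end{itemize}
Summing the three contributions yields the asserted functional equation. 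The identity for ${\bf F}_0(z)$ would then follow from the same symbolic method: a general secondary structure decomposes uniquely as a possibly empty run of unpaired vertices, followed by any number of irreducible blocks each succeeded by a possibly empty run of unpaired vertices, giving ${\bf F}_0(z)=\tfrac{1}{1-z}\sum_{k\ge 0}\bigl({\bf F}^*_0(z)\tfrac{1}{1-z}\bigr)^k$.

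For uniqueness of ${\bf F}^*_0(z)$, I would argue that any irreducible structure has at least one arc and hence at least two vertices, so ${\bf F}^*_0(z)=O(z^2)$ as a formal power series. Writing the functional equation as a fixed-point relation ${\bf F}^*_0(z)=\Phi(z,{\bf F}^*_0(z))$ and inspecting the $z$-valuation of $\Phi$, the coefficient $[z^n]{\bf F}^*_0(z)$ is determined by the coefficients $[z^{n'}]{\bf F}^*_0(z)$ for $n'<n$, so coefficients can be extracted uniquely by induction on $n$.

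The main obstacle I anticipate is the careful bookkeeping for the multi-loop term: one must verify that the SEQ-construction correctly accounts for the irreducibility of each immediately-nested sub-structure (this is what makes ${\bf F}^*_0$ rather than ${\bf F}_0$ appear inside the $k$-fold product) and that the unpaired-vertex gaps between consecutive irreducibles are independent. A minor but necessary check is to ensure consistency with the chosen minimum loop sizes in this simplified model (the factor $\tfrac{z}{1-z}$ in the hairpin term forces at least one unpaired vertex inside a hairpin) so that the symbolic decomposition is bijective.
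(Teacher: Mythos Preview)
Your proposal is correct and follows essentially the same route as the paper: both argue that an irreducible secondary structure has a rainbow arc $(1,n)$, split into the three loop-type cases (hairpin, interior, multi-loop) to obtain the functional equation for ${\bf F}^*_0(z)$ via the symbolic method, and then recover ${\bf F}_0(z)$ as a SEQ of irreducibles interleaved with runs of unpaired vertices. Your uniqueness argument by coefficient extraction is exactly the paper's ``each coefficient can evidently be recursively computed,'' stated with a bit more care.
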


\begin{proof}
We first consider the GF ${\bf F}^*_0(z)$ whose coefficient of $z^n$
denotes the total weight of irreducible secondary structures over $n$
vertexes, where $(1,n)$ is an arc.
Thus it gives a term $6/16 z^2$. Isolated vertex lead to the term
$$
z^p \sum_{i=0}^{\infty} z^i=z^p\frac{1}{1-z},
$$
where $p$ denotes the minimum number of isolated vertexes to be inserted.
Depending on the types of loops formed by $(i,n)$, we have
\begin{itemize}
\item hairpin loops: $\frac{z}{1-z}$,
\item interior loops: ${\bf F}^*_0(z) \left(\frac{1}{1-z}\right)^2$,
\item multi-loops: there are at least two irreducible substructures,
      as well as isolated vertices, thus
$$
\frac{1}{1-z}\sum_{i=2}^{\infty}\left({\bf F}^*_0(z)\frac{1}{1-z}\right)^i=
\frac{\left({\bf F}_0^*(z)\frac{1}{1-z}\right)^2}{1-{\bf F}_0^*(z)\frac{1}{1-z}}\frac{1}{1-z}.
$$
\end{itemize}
We compute
$$
{\bf F}^*_0(z)=\frac{6}{16} \left(e^{0.5} z^2\frac{z}{1-z}+e^{1} z^2
\left(\frac{1}{1-z}\right)^2
{\bf F}^*_0(z)+e^{-5} z^2 \frac{\left({\bf F}_0^*(z)\frac{1}{1-z}\right)^2}
{1-{\bf F}_0^*(z)\frac{1}{1-z}}\frac{1}{1-z}\right),
$$
which establishes the recursion. The uniqueness of the solution as a
power series follows from the fact that each coefficient can evidently
be recursively computed.

An arbitrary secondary structure can be considered as a sequence of
irreducible substructure with certain intervals of isolated vertexes. Thus
$$
{\bf F}_0(z)=\frac{1}{1-z}\sum_{i=0}^{\infty}\frac{1}{1-z}{\bf F}^*_0(z)=
\frac{1}{1-z}\frac{1}{1-{\bf F}_0^*(z)\frac{1}{1-z}}.
$$
\end{proof}

\begin{lemma}\label{L:eumel33}
${\bf F}^*_0(z)$ and ${\bf F}_0(z)$ have the same singular expansion.
\begin{equation}
{\bf f}^*_0(n) \sim \alpha n^{-\frac{3}{2}}\gamma^n, \quad \text{and} \quad
{\bf f}_0(n) \sim \beta n^{-\frac{3}{2}}\gamma^n,
\end{equation}
where $\alpha \approx 0.24$ and $\beta \approx 2.88$ are constants and $\gamma \approx 2.1673$
\end{lemma}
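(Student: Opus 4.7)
The plan is to reduce the statement to a routine singularity analysis of an algebraic generating function. First, I rewrite the functional equation of Lemma \ref{L:eumel3} by clearing the factor $1-{\bf F}^*_0(z)/(1-z)$ and multiplying through by $(1-z)^3$. This produces a polynomial relation $P(z,{\bf F}^*_0(z))=0$ of degree two in ${\bf F}^*_0$ with coefficients rational in $z$. Because the recursion uniquely determines the nonnegative sequence ${\bf f}^*_0(n)$ and ${\bf F}^*_0(0)=0$, the series ${\bf F}^*_0(z)$ is singled out as the unique power-series branch of the algebraic curve $P=0$ that vanishes at the origin.

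Next I invoke the standard analytic-combinatorial machinery for positive algebraic systems. The dominant real singularity $\rho$ is located as the smallest positive solution of the characteristic system $P(z,y)=0$, $\partial P/\partial y(z,y)=0$, which numerically gives $\rho\approx 1/2.1673$. Positivity and aperiodicity of the underlying combinatorial system guarantee that $\rho$ is the unique singularity of ${\bf F}^*_0$ on the circle $|z|=\rho$ and that a Puiseux expansion
$$
{\bf F}^*_0(z)=c_0+c_1\,(1-z/\rho)^{1/2}+O(1-z/\rho)
$$
holds in a slit neighborhood of $\rho$, with $c_0={\bf F}^*_0(\rho)$ and $c_1$ explicitly determined by the Hessian of $P$ at $(\rho,c_0)$. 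The transfer theorem then yields ${\bf f}^*_0(n)\sim \alpha\, n^{-3/2}\gamma^n$ with $\gamma=1/\rho$ and $\alpha=-c_1/(2\sqrt{\pi})$, which evaluated numerically produce $\gamma\approx 2.1673$ and $\alpha\approx 0.24$.

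For ${\bf F}_0$, the second identity of Lemma \ref{L:eumel3} rearranges to
$$
{\bf F}_0(z)=\frac{1}{1-z-{\bf F}^*_0(z)},
$$
so ${\bf F}_0$ inherits its dominant singularity from ${\bf F}^*_0$ provided $1-\rho-{\bf F}^*_0(\rho)\neq 0$. Granting this, substituting the Puiseux expansion above gives ${\bf F}_0(z)$ a branch of the same $(1-z/\rho)^{1/2}$ type with leading coefficient $c_1/(1-\rho-c_0)^2$, and a second application of the transfer theorem yields ${\bf f}_0(n)\sim \beta\, n^{-3/2}\gamma^n$ with $\beta\approx 2.88$.

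The main obstacle is the ``supercritical'' check that $1-z-{\bf F}^*_0(z)$ stays bounded away from zero on $[0,\rho]$, so that the dominant singularity of ${\bf F}_0$ is indeed the branch point $\rho$ of ${\bf F}^*_0$ and not a pole of the rational closure, together with the verification that ${\bf F}^*_0$ is $\Delta$-analytic at $\rho$ (no other singularities of the algebraic curve lie on $|z|=\rho$). Both reduce to explicit finite checks on the discriminant of $P(z,y)$ and on the numerical value ${\bf F}^*_0(\rho)$; once these are performed, the stated asymptotics for both ${\bf f}^*_0(n)$ and ${\bf f}_0(n)$ with the common exponent $-3/2$ and common growth rate $\gamma$ follow immediately from the transfer theorem.
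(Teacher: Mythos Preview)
Your proposal is correct and follows essentially the same route as the paper: locate the dominant square-root singularity $\rho$ of the algebraic function ${\bf F}^*_0$, verify the ``supercritical'' condition that $1-z-{\bf F}^*_0(z)$ does not vanish on $[0,\rho]$ so that ${\bf F}_0$ inherits the branch point rather than acquiring a pole, and apply the transfer theorem. Your write-up is in fact considerably more detailed than the paper's own four-sentence sketch, and your explicit identification of the quadratic $P(z,y)=0$ and the characteristic system makes the argument fully verifiable.
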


\begin{proof}
Solving eq.~\ref{E:sec_rec_loop} we obtain a unique solution for
${\bf F}^*_0(z)$ whose coefficient are all positive. Observing the
dominant singularity of ${\bf F}^*_0(z)$ it is $\rho\approx 0.4614$.
${\bf F}_0(z)$ is a function of ${\bf F}^*_0(z)$ and we examine
the real root of minimal modulus of $1-{\bf F}^*_0(z)\frac{1}{1-z}=0$
is bigger than $\rho$. Then by the supercritical paradigm
\cite{Flajolet:07a} applying,
${\bf F}_0(z)$ and ${\bf F}^*_0(z)$ have identical exponential growth rates.
Furthermore, ${\bf F}^*_0(z)$ and ${\bf F}_0(z)$ have the same
sub-exponential factor $n^{-\frac{3}{2}}$, hence the lemma.
\end{proof}

\begin{theorem}\label{T:eumel22}
Suppose an mfe secondary structure over an interval of length $m$ is
irreducible with probability $\mathbb{P}_0(m)=\frac{{\bf f}^*_0(m)}
{{\bf f}_0(m)}$, then the expected number of candidates for
sequences of lengths $n$ is
$$
\mathbb{E}_0(n)= \Theta(n^2)
$$
and furthermore, setting $\overline{\mathbb{E}}_g(n)=
\mathbb{E}_g(n)/\Omega(n)$, we have
$$
\overline{\mathbb{E}}_0(n) \sim  {\bf f}^*_0(n)/{\bf f}_0(n) \sim b,
$$
where $b=\alpha/\beta\approx 0.08$.
\end{theorem}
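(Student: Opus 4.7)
The plan is to transpose the argument of Theorem~\ref{T:eumel1} to the loop-based setting, replacing the arc-filtered pair $({\bf d}_g(m), {\bf d}^*_g(m))$ by the loop-weighted pair $({\bf f}_0(m), {\bf f}^*_0(m))$ supplied by Lemma~\ref{L:eumel3}, and using the asymptotics of Lemma~\ref{L:eumel33} in place of Theorem~\ref{T:genus}. The crucial input is that ${\bf f}_0(n)$ and ${\bf f}^*_0(n)$ share the same exponential growth rate $\gamma$ and the same sub-exponential factor $n^{-3/2}$, so that $\mathbb{P}_0(m)={\bf f}^*_0(m)/{\bf f}_0(m)\to \alpha/\beta=b$ as $m\to\infty$.

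Concretely, first I would fix an arbitrary $\epsilon>0$ and use Lemma~\ref{L:eumel33} to pick $m_0$ so that, for $m\ge m_0$,
\[
(1-\epsilon)\alpha\, m^{-3/2}\gamma^m \le {\bf f}^*_0(m) \le (1+\epsilon)\alpha\, m^{-3/2}\gamma^m,
\]
and the analogous two-sided bound holds for ${\bf f}_0(m)$ with constant $\beta$. Dividing these inequalities yields $(1-\epsilon')b\le \mathbb{P}_0(m)\le (1+\epsilon')b$ for $m\ge m_0$, where $\epsilon'\to 0$ as $\epsilon\to 0$.

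Second, I would introduce $X_m$, the number of $\Lambda^*$-candidate intervals of length $m$, and note that by hypothesis and linearity of expectation
\[
\mathbb{E}_0(X_m)\le (n-m+1)\,\mathbb{P}_0(m),
\]
since there are exactly $n-m+1$ intervals of length $m$ in $[1,n]$. Summing over $m$ and splitting at the threshold $m_0$,
\[
\mathbb{E}_0(n) \le \sum_{m=1}^{m_0}(n-m+1)\,\mathbb{P}_0(m) + (1+\epsilon')b\sum_{m=m_0}^{n}(n-m+1),
\]
whose right-hand side is $O(n)+\Theta(n^2)=\Theta(n^2)$; the matching lower bound follows identically using $(1-\epsilon')b$, yielding $\mathbb{E}_0(n)=\Theta(n^2)$. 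Dividing by $\Omega(n)=\binom{n+1}{2}=\Theta(n^2)$ makes the head term contribute $O(1/n)$, while the tail contributes a factor tending to $b$, giving $\overline{\mathbb{E}}_0(n)\to b=\alpha/\beta\approx 0.08$.

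The only substantive obstacle is already absorbed into Lemma~\ref{L:eumel33}: one must check that the supercritical paradigm applies to the implicit equation from Lemma~\ref{L:eumel3}, so that ${\bf f}_0$ and ${\bf f}^*_0$ share the same singularity $\rho=1/\gamma\approx 0.4614$ and the same square-root singular exponent, producing identical sub-exponential factors $n^{-3/2}$. Once this is granted, the remainder of the argument is a direct transcription of the proof of Theorem~\ref{T:eumel1}, with the combinatorial constants $a_g,k_g$ replaced by the loop-model constants $\alpha,\beta$ and with $b=\alpha/\beta$ taking the role of $b_g$.
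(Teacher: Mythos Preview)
Your proposal is correct and follows exactly the paper's approach: the paper's own proof simply invokes Lemma~\ref{L:eumel33} to obtain ${\bf f}^*_0(m)/{\bf f}_0(m)\sim b$ and then states that the argument is completely analogous to that of Theorem~\ref{T:eumel1}. You have spelled out this analogy in full detail, replacing $({\bf d}_g,{\bf d}^*_g,a_g,k_g,b_g)$ by $({\bf f}_0,{\bf f}^*_0,\beta,\alpha,b)$ exactly as intended.
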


\begin{proof}
By Lemma \ref{L:eumel33} we have ${\bf f}^*_0(m)/{\bf f}_0(m)\sim b$ where
$b$ is a constant. The proof is completely analogous to that of Theorem
\ref{T:eumel1}.
\end{proof}

We show the distribution of $\mathbb{P}_{0}(m)$ and $\overline{\mathbb{E}}_{0}(n)$ in 
Figure~\ref{F:Prob1l}.

\begin{figure}[ht]
\begin{center}
\includegraphics[width=0.7\columnwidth]{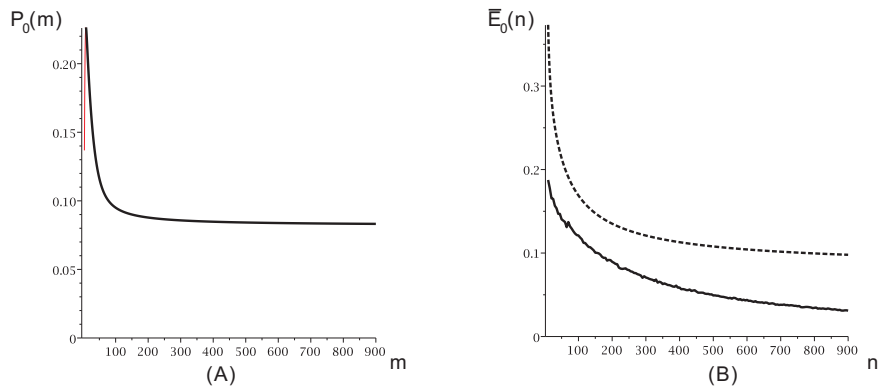}
\end{center}
\caption{\small The distribution of $\mathbb{P}_{0}(m)$ (A) and
$\overline{\mathbb{E}}_{0}(n)$ obtained by folding $100$
random sequences on the loop-based model (B)(solid), as well as
the theoretical expectation implied by Theorem~\ref{T:eumel22} (B)(dashed).
}
\label{F:Prob1l}
\end{figure}

\section*{Results and Discussion}

In this paper we quantify the effect of sparsification of the particular
decomposition rule $\Lambda^*$. This rule splits and interval and thereby
separates concatenated substructures.
The sparsification of $\Lambda^*$ alone is claimed to provide a speed up
of up to a linear factor of the DP-folding of RNA secondary structures
\cite{spar:07}. A similar conclusion is drawn in \cite{Backofen:08} where the
sparsification of RNA-RNA interaction structures is shown to experience
also a linear reduction in time complexity.
Both papers \cite{spar:07,Backofen:08} base their conclusion on the validity
of the polymer-zeta property discussed in Section~{\bf Sparsification}.

For the folding of pseudoknot structures there may however exist
non-sparsifiable rules in which case the overall time complexity is not
reduced.
The key object here is the {\it set of candidates} and we provide an
analysis of $\Lambda^*$-candidates by combinatorial means. In general,
the connection between candidates, i.e.~unions of disjoint intervals and
the combinatorics of structures is actually established by the algorithm
itself via backtracking: at the end of the DP-algorithm a structure is
being generated that realizes the previously computed energy as
mfe-structure. This connects intervals and sub-structures.

So, does polymer-zeta apply in the context of RNA structures?
In fact polymer-zeta would follow {\it if} the intervals in question
are distributed as in uniformly sampled structures. This however, is
far from reasonable, due to the fact that the mfe-algorithm
deliberately designs some mfe structure over the given interval.
What the algorithm produces is in fact antagonistic to uniform sampling.
We here wish to acknowledge the help of one anonymous referee in
clarifying this point.

Our results clearly show that the polymer-zeta property, i.e.~the
probability of an irreducible structure over an interval of
length $m$ satisfies a formula of the form
\begin{equation}\label{E:mmn}
\mathbb{P}(\text{there exists an irreducible structure over $[1,m]$} ) =
b \, m^{1+c}, \quad \text{\rm where }\ b,c>0.
\end{equation}
does not apply for RNA structures. The theoretical findings from
self-avoiding walks \cite{Kabakcioglu:05} unfortunately do not allow
to quantify the expected number of candidates of the $\Lambda^*$-rule
in RNA folding.

That the polymer-zeta property does not hold for RNA has also
been observed in the context of the limit distribution of the
5'-3' distances of RNA secondary structures \cite{Hillary:11}.
Here it is observed that long arcs, to be precise arcs of
lengths $O(n)$ {\it always} exist.
This is of course a contradiction to eq.~(\ref{E:mmn}).

The key to quantification of the expected number of candidates is the
singularity analysis of a pair of energy-filtered GF, namely that of
a class of structures and that of the subclass of all such
structures that are irreducible.
We show that for various energy models the singular
expansions of both these functions are essentially {\it equal}--modulo
some constant. This implies that the expected number of candidates
is $\Theta(n^2)$ and all constants can explicitly be computed from
a detailed singularity analysis. The good news is that depending on
the energy model, a significant constant reduction, around $95\%$ can be
obtained. This is in accordance with data produced in \cite{Backofen:11}
for the mfe-folding of random sequences. There a reduction by $98\%$ is
reported for sequences of length $\ge 500$.

Our findings are of relevance for numerous results, that are formulated
in terms of sizes of candidate sets \cite{Backofen:10}. These can now
be quantified. It is certainly of interest to devise a full fledged
analysis of the loop-based energy model. While these computations are
far from easy our framework shows how to perform such an analysis.

Using the paradigm of gap-matrices Backofen has shown \cite{Backofen:10}
that the sparsification of the DP-folding of RNA pseudoknot structures
exhibits additional instances, where sparsification can be applied, see
Fig.~\ref{F:spar_g} (B).
Our results show that the expected number of candidates is $\Theta(n^2)$,
where the constant reduction is around $90\%$. This is in fact very good
new since the sequence length in the context of RNA pseudoknot structure
folding is in the order of hundreds of nucleotides. So sparsification of
further instances does have an significant impact on the time complexity
of the folding.

\section*{Proofs} \label{S:proof}

In this section, we prove Lemma~\ref{L:irr_rec} and Theorem~\ref{T:irr}.

{\bf Proof for Lemma~\ref{L:irr_rec}:}
let ${\bf D}(z,u)$ and ${\bf D}^*(z,u)$ be the bivariate GF
${\bf D}(z,u)=\sum_{n\geq 0}\sum_{g=0}^{\lfloor\frac{n}{2}\rfloor}{\bf d}_g(n)z^nu^g$,
and ${\bf D}^*(z,u)=\sum_{n\geq 1}\sum_{g=0}^{\lfloor\frac{n}{2}\rfloor}
{\bf d}^*_g(n)z^nu^g$.
Suppose a structure contains exactly $j$ irreducible structures, then
\begin{equation}
{\bf{D}}(z,u)=\sum_{j\geq 0}{\bf{R}}(z,u)^j=\frac{1}{1-{\bf{R}}(z,u)}
\end{equation}
and
\begin{equation}
{\bf D}^*_g(z)=[u^g]{\bf D}^*(z,u)=-[u^g]\frac{1}{{\bf D}(z,u)},\quad g\geq 1,
\end{equation}
as well as ${\bf D}^*_0(z)=1-[u^0]\frac{1}{{\bf D}(z,u)}$.
Let ${\bf F}(z,u)=\sum_{n\geq 0}\sum_{g\geq 0}{\bf f}_g(n)z^nu^g=
\frac{1}{{\bf D}(z,u)}$. Then ${\bf{F}}(z,u){\bf{D}}(z,u)=1$, whence
for $g\geq 1$,
\begin{equation}
\sum_{g_1=0}^g{\bf{F}}_{g_1}(z){\bf{D}}_{g-g_1}(z)
=[u^g]{\bf{F}}(z,u){\bf{D}}(z,u)=0,
\end{equation}
and ${\bf{F}}_{0}(z){\bf{D}}_{0}(z)=1$, where
${\bf{F}}_g(z)=\sum_{n\geq 0}{\bf f}_g(n)z^n=
[u^g]{\bf{F}}(z,u)=[u^g]\frac{1}{{\bf{D}}(z,u)}$.
Furthermore, we have ${\bf{F}}_0(z)=\frac{1}{{\bf{D}}_{0}(z)}$ and
\begin{equation}
{\bf {F}}_g(z)=-\frac{\sum_{g_1=0}^{g-1}{\bf{F}}_{g_1}(z)
{\bf{D}}_{g-g_1}(z)}
{{\bf{D}}_{0}(z)},\quad g\geq 1,
\end{equation}
which implies ${\bf D}^*_0(z)=1-{\bf{F}}_0(z)=1-\frac{1}{{\bf{D}}_{0}(z)}$ and
\begin{equation}
{\bf D}^*_g(z)=-{\bf{F}}_g(z)=-\frac{({\bf D}^*_0(z)-1){\bf{D}}_{g}(z)
+\sum_{g_1=1}^{g-1}
{\bf D}^*_{g_1}(z){\bf{D}}_{g-g_1}(z)}
{{\bf{D}}_{0}(z)}.
\end{equation}

{\bf Proof for Theorem~\ref{T:irr}}
Let $[n]_k$ denotes the set of compositions of $n$ having $k$ parts,
i.e.~for$\sigma\in [n]_k$ we have $\sigma=(\sigma_1,\ldots,\sigma_k)$
and $\sum_{i=1}^k\sigma_i=n$.\\
{\it Claim.}
\begin{equation} \label{E:irr_exp}
{\bf D}^*_{g+1}(z)=\frac{{\bf{D}}_{g+1}(z)}{{\bf{D}}_{0}(z)^2}
+\sum_{j=0}^{g-1}\frac{(-1)^{g+2-j}}{{\bf{D}}_{0}(z)^{g+2-j}}
\left(\sum_{\sigma\in[g+1]_{g+1-j}}\prod_{i=1}^{g+1-j}
{\bf{D}}_{\sigma_i}(z)\right).
\end{equation}
We shall prove the claim by induction on $g$. For $g=1$ we have
\begin{equation}
{\bf D}^*_1(x)=\frac{{\bf{D}}_{1}(z)}
{\left({\bf{D}}_{0}(z)\right)^2},
\end{equation}
whence eq.~(\ref{E:irr_exp}) holds for $g=1$.
By induction hypothesis, we may now assume that for $j\leq g$,
eq.~(\ref{E:irr_exp}) holds. According to Lemma~\ref{L:irr_rec},
we have
\begin{eqnarray*}
{\bf D}^*_{g+1}(z) &=&-\frac{({\bf D}^*_0(z)-1){\bf{D}}_{g+1}(z)
+\sum_{g_1=1}^{g}
{\bf D}^*_{g_1}(z){\bf{D}}_{g+1-g_1}(z)}
{{\bf{D}}_{0}(z)}\\
&=&\frac{{\bf{D}}_{g+1}(z)}{{\bf{D}}_{0}(z)^2}-
\sum_{g_1=1}^{g}\left(\frac{{\bf{D}}_{g_1}(z)}{{\bf{D}}_{0}(z)^{3}}
+\sum_{j=0}^{g_1-2}\frac{(-1)^{g_1+1-j}}{{\bf{D}}_{0}(z)^{g_1+2-j}}
\left(\sum_{\sigma\in[g_1]_{g_1-j}}\prod_{i=1}^{g_1-j}
{\bf{D}}_{\sigma_i}(z)\right)\right){\bf{D}}_{g+1-g_1}(z).
\end{eqnarray*}
We next observe
\begin{equation}
-\sum_{g_1=1}^{g}\frac{{\bf{D}}_{g_1}(z)}
{{\bf{D}}_{0}(z)^{3}}{\bf{D}}_{g+1-g_1}(z)=\frac{(-1)^{g+2-(g-1)}}
{{\bf{D}}_{0}(z)^{g+2-(g-1)}}
\left(\sum_{\sigma'\in[g+1]_{g+1-(g-1)}}\prod_{i=1}^{g+1-(g-1)}
{\bf{D}}_{\sigma_i'}(z)\right),
\end{equation}
and setting $h=g_1-j$ we obtain,
\begin{eqnarray*}
&&-\sum_{g_1=1}^{g}\sum_{j=0}^{g_1-2}\frac{(-1)^{g_1+1-j}}{{\bf{D}}_{0}(z)^{g_1+2-j}}
\left(\sum_{\sigma\in[g_1]_{g_1-j}}\prod_{i=1}^{g_1-j}
{\bf{D}}_{\sigma_i}(z)\right){\bf{D}}_{g+1-g_1}(z)\\
&=&\sum_{g_1=1}^{g}\sum_{h=2}^{g_1}\frac{(-1)^{h+2}}
{{\bf{D}}_{0}(z)^{h+2}}
\left(\sum_{\sigma\in[g_1]_{h}}\prod_{i=1}^{h}
{\bf{D}}_{\sigma_i}(z)\right){\bf{D}}_{g+1-g_1}(z)\\
&=&\sum_{h=2}^{g}\frac{(-1)^{h+2}}
{{\bf{D}}_{0}(z)^{h+2}}\left(\sum_{g_1=h}^{g}
\left(\sum_{\sigma\in[g_1]_{h}}\prod_{i=1}^{h}
{\bf{D}}_{\sigma_i}(z)\right){\bf{D}}_{g+1-g_1}(z)\right)\\
&=&\sum_{h=2}^{g}\frac{(-1)^{h+2}}
{{\bf{D}}_{0}(z)^{h+2}}
\left(\sum_{\sigma'\in[g+1]_{h+1}}\prod_{i=1}^{h+1}
{\bf{D}}_{\sigma_i'}(z)\right)
\end{eqnarray*}
and setting $j=g-h$
\begin{equation*}
=\sum_{j=0}^{g-2}\frac{(-1)^{g+2-j}}
{{\bf{D}}_{0}(z)^{g+2-j}}
\left(\sum_{\sigma'\in[g+1]_{g+1-j}}\prod_{i=1}^{g+1-j}
{\bf{D}}_{\sigma_i'}(z)\right).
\end{equation*}
Consequently, the Claim holds for any $g\ge 1$.

For any $g\geq 1$, we have \cite{rnag3}
$$
\mathbf{D}_g(z) = \frac{1}{z^2-z+1}
                  \frac{{\bf P}_g(u)}{{(1-4u)^{3g-1/2}}}, \qquad
\mathbf{D}_0(z) = \frac{1}{z^2-z+1} \frac{2}{(1+\sqrt{1-4u})},
$$
where ${\bf P}_g(u)$ is a polynomial with integral coefficients of
degree at most $(3g-1)$, ${\bf P}_g(1/4) \neq 0$, $[u^{2g}]{\bf P}_g(u)
\neq 0$ and $[u^h]{\bf P}_g(u) = 0$ for $0\le h\le 2g-1$.
Let $u=\frac{z^2}{(z^2-z+1)^2}$, the Claim provides in this context
the following interpretation of
${\bf D}^*_{g}(z)$
\begin{equation}
\frac{1}{z^2-z+1}{\bf D}^*_{g}(z)=\frac{{\bf P}_g(u)}{(1-4u)^{3g-1/2}}
\left(\frac{1+\sqrt{1-4u}}{2}\right)^2+\sum_{j=0}^{g-2}
\left(-\frac{1+\sqrt{1-4u}}{2}\right)^{g+1-j}
\frac{\sum_{\sigma\in[g]_{g-j}}\prod_{i=1}^{g-j}
{\bf P}_{\sigma_i}(u)}{(1-4u)^{3g-\frac{g-j}{2}}},
\end{equation}
and
\begin{eqnarray*}
&&\sum_{j=0}^{g-2}
\left(-\frac{1+\sqrt{1-4u}}{2}\right)^{g+1-j}
\frac{\sum_{\sigma\in[g]_{g-j}}\prod_{i=1}^{g-j}
{\bf P}_{\sigma_i}(u)}{(1-4u)^{3g-\frac{g-j}{2}}}\\
&=&\sum_{j=0}^{g-2}\sum_{k=0}^{g+1-j}
\left(-\frac{1}{2}\right)^{g+1-j}\binom{g+1-j}{k}
\frac{\sum_{\sigma\in[g]_{g-j}}\prod_{i=1}^{g-j}
{\bf P}_{\sigma_i}(u)}{(1-4u)^{3g-\frac{g-j+k}{2}}}\\
&=&\sum_{j=0}^{g-2}\sum_{s=g-j}^{2g+1-2j}
\left(-\frac{1}{2}\right)^{g+1-j}\binom{g+1-j}{s-g+j}
\frac{\sum_{\sigma\in[g]_{g-j}}\prod_{i=1}^{g-j}
{\bf P}_{\sigma_i}(u)}{(1-4u)^{3g-\frac{s}{2}}}.
\end{eqnarray*}
As $0\leq j\leq g-2$ and $g-j\leq s\leq 2g+1-2j$, we have $s\geq 2$.
Consequently we arrive at
\begin{equation}
\frac{1}{z^2-z+1}{\bf D}^*_g(z)=\frac{{\bf U}_g(u)}{(1-4u)^{3g-1/2}}+
\frac{{\bf V}_g(u)}{(1-4u)^{3g-1}},
\end{equation}
where
\begin{eqnarray*}
{\bf U}_g(u)&=&\frac{{\bf P}_g(u)}{4}+\frac{{\bf P}_g(u)(1-4u)}{4}\\
&+&
\sum_{j=0}^{g-2}\sum_{g-j\leq s\leq 2g+1-2j\atop{s\text{ is odd}}}
\left(-\frac{1}{2}\right)^{g+1-j}\binom{g+1-j}{s-g+j}
\left(\sum_{\sigma\in[g]_{g-j}}\prod_{i=1}^{g-j}
{\bf P}_{\sigma_i}(u)\right)(1-4u)^{\frac{s-1}{2}},
\end{eqnarray*}
and
\begin{eqnarray*}
{\bf V}_g(u)&=&\frac{{\bf P}_g(u)}{2}+\left(-\frac{1}{2}\right)^{3}
\left(\sum_{\sigma\in[g]_{2}}\prod_{i=1}^{2}
{\bf P}_{\sigma_i}(u)\right)+3\left(-\frac{1}{2}\right)^{3}
\left(\sum_{\sigma\in[g]_{2}}\prod_{i=1}^{2}
{\bf P}_{\sigma_i}(u)\right)(1-4u)\\
&+&
\sum_{j=0}^{g-3}\sum_{g-j\leq s\leq 2g+1-2j\atop{s\text{ is even}}}
\left(-\frac{1}{2}\right)^{g+1-j}\binom{g+1-j}{s-g+j}
\left(\sum_{\sigma\in[g]_{g-j}}\prod_{i=1}^{g-j}
{\bf P}_{\sigma_i}(u)\right)(1-4u)^{\frac{s-2}{2}}.
\end{eqnarray*}
We have for $\sigma\in[g]_{k}$, $k\geq1$
\begin{equation*}
[u^h]\left(\sum_{\sigma\in[g]_{k}}\prod_{i=1}^{k}
{\bf P}_{\sigma_i}(u)\right)=\sum_{\sigma\in[g]_{k}}
\prod_{i=1}^{k}[u^{h_i}]{\bf P}_{\sigma_i}(u),
\end{equation*}
where $\sum_{i=1}^k h_i=h$, $h_i\geq 0$.
Then we obtain that
\begin{equation}
[u^h]\left(\sum_{\sigma\in[g]_{k}}\prod_{i=1}^{k}
{\bf P}_{\sigma_i}(u)\right)=0, \quad 0\leq h\leq 2g-1.
\end{equation}
Since $[u^{h_i}]{\bf P}_{\sigma_i}(u)=0$, $h_i\leq 2\sigma_i-1$,
$[u^{2\sigma_i}]{\bf P}_{\sigma_i}(u)\neq 0$ and $\sum_{i=1}^k\sigma_i=g$.
Thus for $0\leq h\leq 2g-1$,
\begin{equation}
[u^h]{\bf U}_g(u)=0\quad\text{and}\quad [u^h]{\bf V}_g(u)=0.
\end{equation}
As shown in \cite{rnag3} we have
\begin{equation}\label{E:q_g41}
{\bf P}_g(1/4) = {\frac {\Gamma  \left( g-1/6 \right)
           \Gamma  \left( g+1/2\right) \Gamma
           \left( g+1/6 \right) {9}^{g}{4}^{-g}}{6{\pi }^{3/2}
           \Gamma \left( g+1 \right)}}
\end{equation}
and we obtain ${\bf U}_g(1/4)={\bf P}_g(1/4)/4$. Furthermore,
$$
{\bf V}_g(1/4)=\frac{{\bf P}_g(1/4)}{2}+\left(-\frac{1}{2}\right)^{3}
\left(\sum_{\sigma\in[g]_{2}}\prod_{i=1}^{2}
{\bf P}_{\sigma_i}(1/4)\right)
=\frac{1}{8}\left(4{\bf P}_g(1/4)-\sum_{j=1}^{g-1}{\bf P}_j(1/4)
{\bf P}_{g-j}(1/4)\right)\neq 0.
$$
We can recruit the computation of \cite{rnag3} in order to observe
$4{\bf P}_g(1/4)-\sum_{j=1}^{g-1}{\bf P}_j(1/4)
{\bf P}_{g-j}(1/4)\neq 0$.
In order to compute the bivariate GF, ${\bf E}^*_g(z,t)$,
we only need to replace in eq.~(\ref{E:irr_exp})
${\bf D}_g(z)$ by ${\bf E}_g(z,t)$ and the proof is completely
analogous.
\bigskip

\section*{Author's contributions}
Fenix W.D. Huang and Christian M. Reidys contributed equally to
research and manuscript.

\section*{Acknowledgements}
  \ifthenelse{\boolean{publ}}{\small}{}
  We want to thank Thomas J.X.~Li for discussions and comments.
  We want to thank an anonymous referee for pointing out an incorrect
assumption of first version of this paper. His comments have led to
a much improved version of the paper.


\newpage
{\ifthenelse{\boolean{publ}}{\footnotesize}{\small}
 \bibliographystyle{bmc_article}  
  \bibliography{spar_article} }     


\ifthenelse{\boolean{publ}}{\end{multicols}}{}



\section*{Figures}
  \subsection*{Figure 1 - RNA structures as planar graphs and diagrams}
  (A) an RNA secondary structure and (B) an RNA pseudoknot structure.

  \subsection*{Figure 2 - Sparsification of secondary structure folding}
Suppose the optimal solution $L_{i,j}$ is obtained from the optimal solutions
$L_{i,k}$, $L_{k+1,q}$ and $L_{q+1,j}$. Based on the recursions of the secondary
structures, $L_{i,k}$ and $L_{k+1,q}$ produce an optimal solution of
$L_{i,q}$. Similarly, $L_{k+1,q}$ and $L_{q+1,j}$ produce an optimal solution
of $L_{k+1,j}$.
Now, in order to obtain an optimal solution of $L_{i,j}$ it is sufficient to
consider either the grouping $L_{i,q}$ and $L_{q+1,j}$ or $L_{i,k}$ and
$L_{k+1,j}$.

  \subsection*{Figure 3 - What sparsification can and cannot prune}
What sparsification can and cannot prune: (A) and (B) are two computation
paths yielding the same optimal solution. Sparsification reduces the
computation to path (A) where $S_{i,k_1}$ is irreducible.
(C) is another computation path with distinct leftmost irreducible over
a different interval, hence representing a new candidate that cannot be
reduced to (A) by the sparsification.

  \subsection*{Figure 4 - Sparsification}
Sparsification: $L_v$ is alternatively realized via
$L_{v_1}$ and $L_{v_2'}$, or $L_{v_1'}$ and $L_{v_3}$. Thus it is sufficient
to only consider one of the computation paths.

  \subsection*{Figure 5 -  The recursion solving the optimal solution for
secondary structures}
The recursion solving the optimal solution for
secondary structures.

  \subsection*{Figure 6 - Decomposition rules for pseudoknot structures of fixed genus}
 (A) three decompositions via the rule $\Lambda^*$, which is $s$-compatible to
    itself.
    We show that for $\Lambda^*$ we obtain a linear reduction in time
    complexity.
(B) three decomposition rules $\Lambda_1,\Lambda_2,\Lambda_3$ where
    $\Lambda_2,\Lambda_3$ are $s$-compatible to $\Lambda_1$.
    A quantification of the candidate set is not implied by the
    polymer-zeta property.
(C) three decomposition rules $\Lambda_1,\Lambda_2,\Lambda_3$ where
    $\Lambda_2,\Lambda_3$ are not $s$-compatible to $\Lambda_1$.

  \subsection*{Figure 7 - RNA structures and diagram representation  }
   A diagram over $\{1,\ldots,40\}$. The arcs $(1,21)$ and
$(11,33)$ are crossing and the dashed arc $(9,10)$ is a $1$-arc which is not allowed.
This structure contains $3$ stacks with length $7$, $4$ and $6$, from left to right
respectively.

  \subsection*{Figure 8 - Irreducibility relative to a decomposition rule }
the rule $\Lambda^*$ splitting $S_{i,j}$ to $S_{i,k}$ and
$S_{k+1,j}$, $S_{1,40}$ is not $\Lambda^*$-irreducible, while $S_{1,25}$ and
$S_{28,40}$ are.
However, for the decomposition rule $\Lambda_2$, which removes the
outmost arc, $S_{28,40}$ is not $\Lambda_2$-irreducible while $S_{1,25}$ is.

  \subsection*{Figure9 -The expected number of candidates for secondary and $1$-structures
$\overline{\mathbb{E}}_{0}(n)$ and $\overline{\mathbb{E}}_{1}(n)$}
we compute the expected number of candidates obtained by folding
$100$ random sequences for secondary structures (A)(solid)
and $1$-structures (B)(solid). We also display the theoretical expectations
implied by Theorem~\ref{T:eumel1} (A)(dashed) and (B)(dashed).

  \subsection*{Figure10- The probability distribution of $\mathbb{P}_{0}(m)$ and
$\mathbb{P}_{1}(m)$}
The probability distribution of $\mathbb{P}_{0}(m)$ (A) and
$\mathbb{P}_{1}(m)$ (B)

  \subsection*{Figure11 -Diagram representation of loop types}
(A) hairpin loop, (B) interior loop, (C) multi-loop.

  \subsection*{Figure12 -The distribution of $\mathbb{P}_{0}(m)$ (A) and
$\overline{\mathbb{E}}_{0}(n)$}
The distribution of $\mathbb{P}_{0}(m)$ (A) and
$\overline{\mathbb{E}}_{0}(n)$ obtained by folding $100$
random sequences on the loop-based model (B)(solid), as well as
the theoretical expectation implied by Theorem~\ref{T:eumel22} (B)(dashed).




\end{bmcformat}
\end{document}